%
%
%
%
\documentclass[preprint,review,10pt]{amsart}
\usepackage{amsmath,amssymb,amsfonts,xspace}
\newtheorem{theorem}{Theorem}[section]

\theoremstyle{definition}
\newtheorem{definition}[theorem]{Definition}
\newtheorem{example}[theorem]{Example}

\newtheorem{corollary}[theorem]{Corollary}

\theoremstyle{remark}

\numberwithin{equation}{section}



\begin{document}

\title{      classical prime subhypermodules and related classes 
 }

\author{Mahdi  Anbarloei}
\address{Department of Mathematics, Faculty of Sciences,
Imam Khomeini International University, Qazvin, Iran.
}

\email{m.anbarloei@sci.ikiu.ac.ir}


\subjclass[2010]{ 16Y99, 20N20 }


\keywords {$n$-ary classical prime subhypermodule, $n$-ary weakly classical prime subhypermodule, $n$-ary $\phi$-classical prime subhypermodule,  $(m,n)$-hypermodule.}

\begin{abstract}
In this paper, we extend the notion of prime subhypermodules
to  $n$-ary classical prime, $n$-ary weakly classical prime and $n$-ary $\phi$-classical prime subhypermodules of an $(m,n)$-hypermodule over a commutative Krasner $(m,n)$-hyperring. Many properties and characterizations of them are introduced. Moreover, we investigate the behavior of these structures under hypermodule homomorphisms,  quotient hypermodules and cartesian product. We think the knowledge gained in this setting provides a significant step in the general investigation of  subhypermodules. 
\end{abstract}
\maketitle
\section{Introduction}
To extend the notion of prime  ideals from the category of rings to the category of modules has excited several researchers to show that many, but not all, of the results in the theory of rings are also valid for modules. The concept of classical prime submodules as an extension of prime submodules was introduced by Behboodi and Koohy in \cite{beh}. A proper submodule $Q$ of $M$ is said to be  a classical prime submodule, if for each $r,s \in R$ and $a \in M$, $rsm \in Q$ implies that $ra \in Q$ or $sa \in Q$. Moreover, the notion of weakly classical prime submodules, which is a generalization of classical prime submodules was studied in \cite{hoj}.

The theory of algebraic  hyperstructures playing an important role in
the classical algebraic theory was born
in 1934 by a French mathematician, F. Marty, at the $8^{th}$ Congress of Scandinavian Mathematicians. A comprehensive review of the theory of hyperstructures appears in \cite{2, 3, 4, 5}.

 The concept of $n$-ary algebras  was introduced  by Kasner in a lecture in a annual meeting in 1904 \cite{6}. The first paper on the theory of $n$-ary groups was  written by  Dorente in 1928 \cite{7}. Moreover, for the
first time in \cite{8}
the notion of Krasner hyperrings was introduced by Krasner.  Some properties on this hyperrings can be seen in \cite{9,10}. The concept of $n$-ary hypergroups was defined  in \cite{11} as an extension  of hypergroups in the sense of Marty. After the introduction of the concept of $(m,n)$-hyperrings in \cite{12}, Davvaz et al. extended $(m, n)$-rings to  Krasner $(m, n)$- hyperrings  and studied some results in this context in \cite{13}.  Several classes of hyperideals namely  maximal hyperideal, $n$-ary prime hyperideal, $n$-ary primary hyperideal and the radical of a hyperideal in a Krasner $(m,n)$-hyperring were introduced in \cite{sorc1}. \\
\cite{13} A commutative  Krasner $(m, n)$-hyperring with a scalar identity
 $1$ is an algebraic hyperstructure $(R, f^{\prime}, g^{\prime})$ if the following hold:
(1) $(R, f^{\prime}$) is a canonical $m$-ary hypergroup, 
(2) $(R, g^{\prime})$ is a commutative $n$-ary semigroup, 
(3) the $n$-ary operation $g^{\prime}$ is distributive with respect to the $m$-ary hyperoperation $f^{\prime}$, i.e.,  $g^{\prime}(a^{i-1}_1, f^{\prime}(x^m _1 ), a^n _{i+1}) = f^{\prime}(g^{\prime}(a^{i-1}_1, x_1, a^n_{ i+1}),..., g^{\prime}(a^{i-1}_1, x_m, a^n_{ i+1}))$, for each $a^{i-1}_1 , a^n_{ i+1}, x^m_ 1 \in R$, and $1 \leq i \leq n$,
(4) $0$ is a zero element  of the $n$-ary operation $g^{\prime}$, i.e., for every $x^n_ 2 \in R$ we have $g^{\prime}(0, x^n _2) = g^{\prime}(x_2, 0, x^n _3) = ... = g^{\prime}(x^n_ 2, 0) = 0$,
(5) for all $x \in R$, $g(x,1^{(n-1)})=x$.

The sequence $x_i, x_{i+1},..., x_j$ is denoted by $x^j_i$. For $j < i$, $x^j_i$ is the empty symbol. In this convention
$f^{\prime}(x_1,..., x_i, y_{i+1},..., y_j, z_{j+1},..., z_n)$
will be written as $f^{\prime}(x^i_1, y^j_{i+1}, z^n_{j+1})$. In the case when $y_{i+1} =... = y_j = y$ the last expression will be written in the form $f^{\prime}(x^i_1, y^{(j-i)}, z^n_{j+1})$. 
For non-empty subsets $A_1,..., A_n$ of $R$ we define
$f^{\prime}(A^n_1) = f^{\prime}(A_1,..., A_n) = \bigcup \{f^{\prime}(x^n_1) \ \vert \  x_i \in  A_i, i = 1,..., n \}$. A non-empty subset $S$ of $R$ is called a subhyperring of $R$ if $(S, f^{\prime}, g^{\prime})$ is a Krasner $(m, n)$-hyperring. Let
$I$ be a non-empty subset of $R$, we say that $I$ is a hyperideal of $(R, f^{\prime}, g^{\prime})$ if $(I, f^{\prime})$ is an $m$-ary subhypergroup of $(R, f^{\prime})$ and $g^{\prime}(x^{i-1}_1, I, x_{i+1}^n) \subseteq I$, for every $x^n _1 \in  R$ and  $1 \leq i \leq n$. For each element $x \in R$, the hyperideal generated by $x $ is denoted by $\langle x \rangle$ and defined as  $\langle x \rangle =g(R,x,1^{(n-2)})=\{g(r,x,1^{(n-2)}) \ \vert \ r \in R \}$.
Recall from \cite{sorc1} that a proper hyperideal $P$ of a Krasner $(m, n)$-hyperring $(R, f^{\prime}, g^{\prime})$ is an $n$-ary prime hyperideal if for hyperideals $I_1,..., I_n$ of $R$, $g^{\prime}(I_1^ n) \subseteq P$ implies that $I_1 \subseteq P$ or $I_2 \subseteq P$ or ...or $I_n \subseteq P$. Also, Lemma 4.5 in \cite{sorc1} shows that a proper hyperideal $P$  of a Krasner $(m, n)$-hyperring $(R, f^{\prime}, g^{\prime})$ is an $n$-ary prime hyperideal if for all $x^n_ 1 \in R$, $g^{\prime}(x^n_ 1) \in P$ implies that $ x_i \in P$ for some $1 \leq i \leq n$. 

 Hypermodules over a hyperring is a generalization of the classical modules over a ring. Several types
of  hypermodules were introduced  by many authors. The notion of $(m, n)$-hypermodules over $(m, n)$-hyperrings was defined in \cite{16}. After, some classes of the hypermodules  were studied in \cite{17, 18, sorc2}. Prime and  primary   subhypermodules of an $(m,n)$-hypermodule were discussed in \cite{sorc3}.

Motivated and inspired by the above papers, the purpose of this research  work is to introduce and study generalizations
of prime subhupermodules. We define the notions of classical prime, weakly classical prime and $\phi$-classical prime subhypermodules of an $(m,n)$-hypermodule over a commutative Krasner $(m,n)$-hyperring with a scalar identity  $1$. Then a number of major conclusions are given to explain the general framework of these structures. Moreover, we give some characterizations of these concepts on cartesian product of $(m,n)$-hypermodules. 
\section{Preliminaries}
In this section, we recall some basic terms and definitions  concerning $n$-ary hyperstructures which we need to develop our paper.\\
\begin{definition} (\cite{16})
Let $M$ be a nonempty set. Then $(M, f, g)$ is an $(m, n)$-hypermodule over an $(m, n)$-
hyperring $(R, f^{\prime}, g^{\prime})$, simply $R$, if $(M, f)$ is a canonical $m$-ary hypergroup and the map 

$g:\underbrace{R \times ... \times R}_{n-1} \times M\longrightarrow 
P^*(M)$\\
statisfied the following conditions:

$(i)\  g(r_1^{n-1},f(x_1^m))=f(g(r_1^{n-1}
,x_1),...,g(r_1^{n-1}
,x_m))$

$(ii)\  g(r_1^{i-1},f^{\prime}(s_1^m),r_{i+1}^{n-1},x)=f(g(r_1^{i-1}
,s_1,r_{i+1}^{n-1},x),...,g(r_1^{i-1}
s_m,r_{i+1}^{n-1},x))$

$(ii)\  g(r_1^{i-1},g^{\prime}(r_i^{i+n-1}),r_{i+m}^{n+m-2},x)=
g(r_1^{n-1},g(r_m^{n+m-2},x))$

$ (iv) \ \{0\}=g(r_1^{i-1},0,r_{i+1}^{n-1},x)$.
\end{definition} 
If $g$ is an $n$-ary hyperoperation, $A_1,...,A_{n-1}$ are subsets of $R$ and $M^{\prime} \subseteq M$, we set

$g(A_1^{n-1},M^{\prime})=\bigcup\{g(r_1^{n-1},m)\ \vert \ r_i \in A_i, 1 \leq i \leq n-1, m \in M^{\prime}\}.$\\
Let $1$ be  a scalar identity in $R$. For every $a \in M$ and $r_1^{n-1} \in R$ we have
\[g(1^{(n-1)},a)=\{a\}, \hspace{1cm}g(0^{(n-1)},a)=\{0\}, \hspace{1cm} g(r_1^{n-1},0)=\{0\}.\]

Let $(M, f, g)$ be an $(m, n)$-hypermodule over $R$. A non-empty subset $N$ of $M$ is said to be  an $(m,n)$-subhypermodule of $M$ if $(N,f)$ is a $m$-ary subhypergroup of $(M,f)$ and $g(R^{(n-1)},N) \in P^*(N)$.

\cite{sorc2} Let $(M, f, g)$ be an $(m, n)$-hypermodule,  $N$  a subhypermodule of $M$ and $a$ an element of $M$. Then the hyperideals $S_N$ and $N_a$ is considered as follows:

$S_N=\{r \in R \ \vert \ g(r,1^{(n-2)},M) \subseteq N\}$

 $N_a=\{r \in R \ \vert \ g(r,1^{(n-2)},a) \subseteq N\}$
 
\begin{definition} (\cite{sorc3})
Let $M$ be an $(m, n)$-hypermodule over  $R$. A proper subhypermodule $K$ of $M$ is said to be
maximal, if for $N \leq M$ with $K \subseteq  N \subseteq  M$, we have either $K = N$ or $N = M$. 
\end{definition}  
\begin{definition} (\cite{sorc3})
Let $M$ be an $(m, n)$-hypermodule over  $R$. A proper subhypermodule $N$ of $M$ is said to be
$n$-ary prime, if $g(r_1^{n-1},a)\subseteq N$ with $r_1^{n-1} \in R$ and $a \in M - N$, implies that $g(r_1^{n-1},M) \subseteq N$. 
\end{definition}
In \cite{sorc2}, there exists another definition of $n$-ary prime subhypermodules which is equivalent to above definition. A proper subhypermodule $N$ of $M$ is called   
$n$-ary prime, if $g(r_1^{n-1},a)\subseteq N$ with $r_1^{n-1} \in R$  implies that $a \in  N$ or $r_i \in S_N$ for some $1 \leq i \leq n-1$.



\begin{definition}(\cite{sorc3})
Let $N$ be a subhypermodule of an  $(m, n)$-hypermodule $(M, f, g)$ over  $R$. Then the set

$M/N = \{f(x^{i-1}_1, N, x^m_{i+1}) \ \vert \  x^{i-1}_1,x^m_{i+1} \in M \}$\\
endowed with $m$-ary hyperoperation $f$ which for all $x_{11}^{1m},...,x_{m1}^{mm} \in M$

$F(f(x_{11}^{1 (i-1)}, N, x^{1m}_ {1(i+1)}),..., f(x_{m1}^{ m(i-1)}, N, x^{mm}_ {m(i+1)}))$ 

$\hspace{0.3cm}= \{ (f(t_1^{i-1}, N,t_{i+1}^m ) \ \vert \ t_1 \in f(x_{11}^{m1}),...,t_m \in  f(x_{1m}^{mm})\}$\\
and with $n$-ary hyperoperation $G:\underbrace{R \times ... \times R}_{n-1} \times M/N \longrightarrow 
 P^*(M/N)$ which for all $x_1^{i-1},x_{i+1}^m\in M$ and $r_1^{n-1} \in R$

$G(r_1^{n-1},f(x_1^{i-1},N,x_{i+1}^m))$

$\hspace{0.3cm}=\{f(z_1^{i-1},N,z_{i+1}^m)\ \vert \ z_1 \in g(r_1^{n-1},x_1),..., z_m \in g(r_1^{n-1},x_m)\}$\\
is an  $(m, n)$-hypermodule over  $R$, and $(M/N, F, G)$  is called the quotient $(m, n)$-hypermodule  of $M$ by $N$. 
\end{definition} 




\begin{definition} (\cite{sorc2})
For every nonzero element $m$ of $(m, n$)-hypermodule $(M, f , g)$ over $R$, we define

$F_m
= \{r \in  R  \ \vert \ 0 \in  g(r, 1^ {(n-2)}, m);  r \neq  0\}.$\\
It is clear that $F_m$ is a hyperideal of $(R, h, k)$.  The $(m, n)$-hypermodule $(M, f , g)$ is said to be  faithful,
if $F_m = \{0\}$ for all nonzero elements $m \in  M$, that is
$0 \in  g(r, 1^{( n-2)}, m)$ implies that  $r = 0$, for $r \in  R$. 
\end{definition}
\begin{definition}
(\cite{sorc3}) Assume that $(M_1,f_1,g_1)$ and $M_2,f_2,g_2)$ are two $(m,n)$-hypermodules over $R$. A mapping $h:  M_1 \longrightarrow M_2$  is a homomorphism of $(m, n)$-hypermodules if for all $a_1^m, a \in M_1$ and $r_1^{n-1} \in R$:
\[h(f_1(a_1^m))=f_2(h(a_1),\cdots,h(a_m)),\]
$\hspace{3.7cm} h(g_1(r_1^{n-1},a))=g_2(r_1^{n-1},h(a)).$
\end{definition}

\section{$n$-ary classical prime subhypermodules}
In this section, we want to consider the concept of an $n$-ary classical prime subhypermodule which is a generalization of the concept of prime submodules.
\begin{definition}
Let $Q$ be a proper subhypermodule of an $(m,n)$-hypermodule $M$ over $R$. $Q$ refers to an $n$-ary classical prime subhypermodule if for $r_1^{n-1} \in R$ and $a \in M$, $g(r_1^{n-1},a) \subseteq Q$ implies that $g(r_i,1^{(n-2)},a) \subseteq Q$ for some $1 \leq i \leq n-1$.
\end{definition}
\begin{example}
Suppose that $R=\{0,1,2\}$ and define $3$-ary hyperoperation $f^{\prime}$ and $3$-ary operation $g^{\prime}$ on $R$ as follows:

$f^{\prime}(0,0,0)=0, \hspace{0.8cm}f^{\prime}(0,0,2)=2, \hspace{0.8cm}f^{\prime}(1,1,2)=f^{\prime}(1,2,2)=\{1,2\},  $

$f^{\prime}(0,0,1)=1, \hspace{0.8cm}f^{\prime}(2,2,2)=2, \hspace{0.8cm}f^{\prime}(0,1,1)=\{0,1\},  $

$f^{\prime}(1,1,1)=1, \hspace{0.8cm}f^{\prime}(0,1,2)=R, \hspace{0.7cm}f^{\prime}(0,2,2)=\{0,2\},  $\\
and 

$g^{\prime}(r_1,r_2,r_3)=0;  \hspace{0.8cm} r_i=0,  \hspace{0.4cm}1 \leq i \leq 3 $

$g^{\prime}(1,1,1)=1, \hspace{1.2cm}g^{\prime}(1,2,2)=g^{\prime}(1,1,2)=g^{\prime}(2,2,2)=2  $\\
Then $(R,f^{\prime},g^{\prime})$ is a commutative $(3,3)$-hyperring. Now, consider the set $M=\{0,1,2,3\}$. $(M,f,g)$   is an $(3,3)$-hypermodule over $R$ with $3$-ary hyperoperations $f$ and $3$-ary external hyperoperation $g$ defined by:

 $f(0,0,0)=0, \hspace{1.7cm}f(0,0,1)=1, \hspace{1.8cm}f(0,0,2)=2,  \hspace{0.8cm}$

 $f(0,0,3)=3,\hspace{1.7cm} f(1,1,1)=1, \hspace{1.8cm} f(2,2,2)=2,$
 
 $f(3,3,3)=3,  \hspace{1.7cm}f(0,1,1)=\{0,1\},\hspace{1.1cm} f(0,2,2)=\{0,2\},$
 
 $ f(0,3,3)=\{0,3\},\hspace{1cm}f(1,2,3)=\{1,2,3\},\hspace{0.7cm}f(0,1,2)=\{0,1,2\}, $

 $ f(0,1,3)=\{0,1,3\},\hspace{0.7cm}f(0,2,3)=\{0,2,3\},\hspace{0.7cm}f(2,2,3)=f(2,3,3)=\{2,3\},$
 
 $ f(1,1,2)=f(1,2,2)=\{1,2\},\hspace{3cm}f(1,1,3)=f(1,3,3)=\{1,3\},$\\and for $r_1^2 \in R$ and $a \in M,$
\[ 
g(r_1^2,a)=
\begin{cases}
\{0\} & \text{if $r_1=0$  or $r_2=0$ or $a=0$},\\
\{2\}  & \text{if $r_1, r_2 \neq 0$ and $a \neq 0$,}\\
\{a\} & \text{if $r_1=r_2=0$.}
\end{cases}\] 
Let $Q=\{0,2\}$. Then $Q$ is a $3$-ary classical prime subhypermodule of $M$.
\end{example}
\begin{theorem} \label{11}
Let $Q$ be a proper subhypermodule of an $(m,n)$-hypermodule $M$ over $R$. Then $Q$ is an $n$-ary classical prime subhypermodule if and only if for  hyperideals $I_1^{n-1}$ of $R$ and subhypermodule $N$ of $M$, if $g(I_1^{n-1},N) \subseteq Q$,  then $g(I_i,1^{(n-2)},N) \subseteq Q$ for some $1 \leq i \leq n-1$.
\end{theorem}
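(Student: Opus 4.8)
The statement is a biconditional, and I would prove the two implications separately; the implication from the element-wise definition (classical primeness) to the ideal-theoretic condition is the substantial one, while the converse is routine.

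For the converse, assume the ideal-theoretic condition and let $r_1^{n-1}\in R$ and $a\in M$ satisfy $g(r_1^{n-1},a)\subseteq Q$. I would feed the condition the principal hyperideals $I_i=\langle r_i\rangle$ together with the cyclic subhypermodule $\langle a\rangle$ generated by $a$. The only point needing verification is that $g(r_1^{n-1},a)\subseteq Q$ already forces $g(\langle r_1\rangle,\dots,\langle r_{n-1}\rangle,\langle a\rangle)\subseteq Q$; this is a direct ``generators absorb'' computation: every generator $g(t_1,\dots,t_{n-1},b)$ with $t_i\in\langle r_i\rangle$ and $b\in\langle a\rangle$ can, using the distributivity and associativity axioms of the hypermodule together with the distributivity of $g'$ in $R$, be rewritten as a scalar multiple of an $f$-combination of elements of $g(r_1^{n-1},a)$, and hence lands in $Q$ because $Q$ is a subhypermodule (closed under $g(R^{(n-1)},N)$ and under $f$). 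Once $g(I_1^{n-1},\langle a\rangle)\subseteq Q$ is in hand, the hypothesis yields an index $i$ with $g(I_i,1^{(n-2)},\langle a\rangle)\subseteq Q$, and since $r_i\in\langle r_i\rangle$ and $a\in\langle a\rangle$ this specializes to $g(r_i,1^{(n-2)},a)\subseteq Q$, which is exactly the defining property.

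For the forward implication I would argue by contradiction. Assuming $Q$ is classical prime and $g(I_1^{n-1},N)\subseteq Q$ but $g(I_i,1^{(n-2)},N)\not\subseteq Q$ for every $i$, I first choose for each $i$ a witness pair $s_i\in I_i$ and $x_i\in N$ with $g(s_i,1^{(n-2)},x_i)\not\subseteq Q$. The key difference from the analogous statement for prime hyperideals (where one simply plugs one witness into each of the $n$ slots of $g'$) is that $g$ has only a single module slot, so the $x_i$ must be amalgamated into one element. I would set $x\in f(x_1,\dots,x_{n-1},0^{(m-n+1)})\subseteq N$ and observe $g(s_1,\dots,s_{n-1},x)\subseteq g(I_1^{n-1},N)\subseteq Q$; classical primeness then supplies a single index $i_0$ with $g(s_{i_0},1^{(n-2)},x)\subseteq Q$. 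Because $(M,f)$ is a canonical, hence reversible, $m$-ary hypergroup, $x_{i_0}$ lies in an $f$-combination of $x$ and the inverses of the remaining $x_j$; distributing $g(s_{i_0},1^{(n-2)},\cdot)$ across it shows $g(s_{i_0},1^{(n-2)},x_{i_0})\subseteq Q$ \emph{provided} every cross term $g(s_{i_0},1^{(n-2)},x_j)$ with $j\neq i_0$ already lies in $Q$, and this contradicts the choice of the witness $x_{i_0}$.

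Thus the whole argument hinges on the cross-term control, i.e.\ on showing $g(s_{i_0},1^{(n-2)},x_j)\subseteq Q$ for $j\neq i_0$, and this is where I expect the main obstacle. Each product $g(s_1,\dots,s_{n-1},x_j)$ lies in $Q$, so classical primeness gives \emph{some} index $k\neq j$ (it cannot be $j$, by the choice of $x_j$) with $g(s_k,1^{(n-2)},x_j)\subseteq Q$; when $n-1=2$ this forces $k=i_0$ and the proof closes at once, exactly as in the classical module case of Behboodi--Koohy. For $n-1\geq 3$ the index $k$ is not pinned down, so I would have to iterate the construction---re-amalgamating witnesses and re-applying the element-wise property, using the commutativity of $g'$ to permute scalar slots---until every relevant cross term is driven into $Q$. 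Making this bookkeeping uniform across all indices is the technical heart of the argument and the step on which I would spend the most care.
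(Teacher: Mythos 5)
Your converse direction is correct, and your two-implication structure is the natural one; the paper itself gives no details at all for this theorem (it only points to the proof of Theorem 2.14 of Darani et al., which is the binary case $n-1=2$), so the $n$-ary difficulty you flag is real and is not addressed in the paper either. But the flagged step is a genuine gap in your proof, not just bookkeeping to be spent care on, and the remedy you propose (iterated re-amalgamation of witnesses) cannot work in principle. Here is why: every application of classical primeness with the fixed tuple $(s_1,\dots,s_{n-1})$ tells you only that a given element of $N$ lies in one of the $n-1$ sets $N_i=\{x\in N: g(s_i,1^{(n-2)},x)\subseteq Q\}$, each of which is a subhypermodule of $N$; your witnesses say no $N_i$ equals $N$; and your sums and reversibility arguments use only closure properties of the $N_i$. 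So your entire toolkit exploits exactly one fact: $N$ is covered by $n-1$ proper subhypermodules. For $n-1=2$ that is already contradictory (a canonical hypergroup is never the union of two proper subhypergroups — your sum trick is precisely the proof), which is why your argument closes there. For $n-1\geq 3$ it is \emph{not} contradictory: an abelian group can be an irredundant union of three proper subgroups (the Klein four group and its three subgroups of order $2$), so no combinatorial scheme of re-amalgamating witnesses inside this framework can ever produce a contradiction. Any correct proof must inject information beyond the covering.

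The missing idea is to apply primeness to \emph{regrouped} scalar tuples, using the associativity axiom linking $g$ and $g^{\prime}$ together with the scalar identity — the same manipulation this paper uses elsewhere, e.g. in the proof of its Theorem 4.3, where $g(g^{\prime}(r_1^n),1^{(n-2)},a)$ is rewritten as $g(r_1^{n-2},g^{\prime}(r_{n-1}^n,1^{(n-2)}),a)$. Concretely, first prove the statement for fixed scalars: if $g(r_1^{n-1},N)\subseteq Q$, rewrite $g(r_1^{n-1},w)=g(g^{\prime}(r_1^{n-2},1^{(2)}),r_{n-1},1^{(n-3)},w)$ for each $w\in N$; primeness applied to this $(n-1)$-tuple (whose last $n-3$ entries are $1$, and $g(1,1^{(n-2)},w)=\{w\}\subseteq Q$ forces the other alternatives anyway) yields that either $g(r_1^{n-2},1,w)\subseteq Q$ or $g(r_{n-1},1^{(n-2)},w)\subseteq Q$. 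Hence $N$ is the union of just \emph{two} subhypermodules, so by the two-subhypergroup lemma one of them is all of $N$: either $g(r_{n-1},1^{(n-2)},N)\subseteq Q$ (done) or $g(r_1^{n-2},1,N)\subseteq Q$, and you recurse on $n-2$ scalars until two remain, where your own argument finishes. The ideal-theoretic version then follows trivially: if $g(I_i,1^{(n-2)},N)\nsubseteq Q$ for all $i$, choose $s_i\in I_i$ with $g(s_i,1^{(n-2)},N)\nsubseteq Q$; then $g(s_1^{n-1},N)\subseteq Q$ contradicts the fixed-scalar statement. Note this also removes the need to pair each $s_i$ with an element witness $x_i$ at the outset, which is where your bookkeeping exploded.
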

\begin{proof}
This can be proved by using an argument similar to that in the proof of Theorem 2.14 in \cite{darani}.
\end{proof}
\begin{theorem} \label{12}
Let $Q$ be a proper subhypermodule of an $(m,n)$-hypermodule $M$ over $R$ and let $S=M-Q$. Then $Q$ is an $n$-ary classical prime subhypermodule  of $M$  if and only if for  hyperideals $I_1^{n-1}$ of $R$ and for subhypermodules $N_1,N_2$ of $M$,  $f(N_1,g(I_i,1^{(n-2)},N_2),0^{(m-2)}) \cap S \neq \varnothing$ for all $1 \leq i \leq n-1$ implies that $f(N_1,g(I_1^{n-1},N_2),0^{(m-2)}) \cap S \neq \varnothing$.
\end{theorem}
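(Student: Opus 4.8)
The plan is to treat the two conditions as contrapositives of containment statements and to route everything through the ideal--subhypermodule characterization of Theorem \ref{11}. Since $S = M - Q$, for any subset $X \subseteq M$ one has $X \cap S \neq \varnothing$ exactly when $X \not\subseteq Q$; so the displayed implication is equivalent to its contrapositive, namely that $f(N_1, g(I_1^{n-1}, N_2), 0^{(m-2)}) \subseteq Q$ forces $f(N_1, g(I_i, 1^{(n-2)}, N_2), 0^{(m-2)}) \subseteq Q$ for some $1 \leq i \leq n-1$. I will prove the theorem in this containment form, using repeatedly that $0$ is the scalar identity of $f$ (so that $f(x, 0^{(m-1)}) = \{x\}$), that $g(r_1^{n-1}, 0) = \{0\}$, and that $(Q,f)$ is closed, in particular $f(Q, Q, 0^{(m-2)}) \subseteq Q$.

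For the forward direction I assume $Q$ is $n$-ary classical prime and verify the containment form. Suppose $f(N_1, g(I_1^{n-1}, N_2), 0^{(m-2)}) \subseteq Q$. Since $0 \in N_2$ gives $0 \in g(I_1^{n-1}, N_2)$, slotting $0$ into the second argument and using that $0$ is the identity of $f$ yields $N_1 = f(N_1, 0, 0^{(m-2)}) \subseteq Q$; symmetrically, since $0 \in N_1$, I obtain $g(I_1^{n-1}, N_2) \subseteq Q$. Now Theorem \ref{11} applies to the hyperideals $I_1^{n-1}$ and the subhypermodule $N_2$ and produces an index $i$ with $g(I_i, 1^{(n-2)}, N_2) \subseteq Q$. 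Combining $N_1 \subseteq Q$ with this containment and invoking the closure of $Q$ under $f$, I conclude $f(N_1, g(I_i, 1^{(n-2)}, N_2), 0^{(m-2)}) \subseteq f(Q, Q, 0^{(m-2)}) \subseteq Q$, which is exactly what the containment form requires.

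For the converse I assume the condition and must show $Q$ is classical prime; by Theorem \ref{11} it suffices to establish its ideal characterization. Given hyperideals $I_1^{n-1}$ and a subhypermodule $N$ with $g(I_1^{n-1}, N) \subseteq Q$, I specialize the hypothesis to $N_1 = \{0\}$ and $N_2 = N$. Because $f(\{0\}, X, 0^{(m-2)}) = X$ for every $X \subseteq M$, the outer set collapses: $f(\{0\}, g(I_1^{n-1}, N), 0^{(m-2)}) = g(I_1^{n-1}, N) \subseteq Q$, so it meets $S$ emptily. The contrapositive of the assumed condition then supplies an index $i$ with $f(\{0\}, g(I_i, 1^{(n-2)}, N), 0^{(m-2)}) = g(I_i, 1^{(n-2)}, N) \subseteq Q$, which is precisely the conclusion in the characterization of Theorem \ref{11}. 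Hence $Q$ is $n$-ary classical prime.

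The only genuinely delicate points are the bookkeeping around the padding zeros $0^{(m-2)}$: one must carefully justify, from the canonical $m$-ary hypergroup axioms together with the hypermodule zero laws, that inserting $0$ into an argument of $f$ acts as an identity, that $0$ always lies in $g(I_1^{n-1}, N_2)$, and that $f(Q, Q, 0^{(m-2)}) \subseteq Q$. These reductions are what let both directions collapse onto Theorem \ref{11}; once they are in hand the remainder is formal, so I expect this reduction---rather than any deeper structural fact---to be the main thing to get right.
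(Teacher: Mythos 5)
Your proof is correct and follows essentially the same route as the paper's: both directions are reduced, via the observation that $X \cap S \neq \varnothing$ iff $X \nsubseteq Q$, to the hyperideal--subhypermodule characterization of Theorem \ref{11}, using the zero element as padding in $f$ (taking $N_1=\{0\}$ in the converse) and the closure of $Q$ under $f$. In fact your write-up makes explicit two steps the paper leaves implicit, namely that $N_1 \subseteq Q$ follows from $0 \in g(I_1^{n-1},N_2)$ and that $f(Q,Q,0^{(m-2)}) \subseteq Q$ is needed to reach the contradiction.
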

\begin{proof}
$(\Longrightarrow)$ Let $I_1^{n-1}$ be hyperideals of $R$ and let $N_1$ and $N_2$ be two subhypermodules of an $(m,n)$-hypermodule $M$ over $R$ with $f(N_1,g(I_i,1^{(n-2)},N_2),0^{(m-2)}) \cap S \neq \varnothing$ for all $1 \leq i \leq n-1$. Suppose that $f(N_1,g(I_1^{n-1},N_2),0^{(m-2)}) \cap S = \varnothing$. This implies $g(I_1^{i-1},N_2) \subseteq Q$. Then we get $g(I_i,1^{(n-2)},N_2) \subseteq Q$ for some $1 \leq i \leq n-1$ since $Q$ is an $n$-ary classical prime subhypermodule of $M$. Thus we obtain $f(N_1,g(I_i,1^{(n-2)},N_2),0^{(m-2)}) \cap S = \varnothing$ which is a contradiction. \\
$(\Longleftarrow)$ Let  $g(I_1^{n-1},N) \subseteq Q$ for  hyperideals $I_1^{n-1}$ of $R$ and for a subhypermodule $N$ of an $(m,n)$-hypermodule $M$ over $R$ but $g(I_i,1^{(n-2)},N) \nsubseteq Q$ for all $1 \leq i \leq n-1$. Then we conclude that $g(I_i,1^{(n-2)},N) \cap S \neq \varnothing$ for all $1 \leq i \leq n-1$ which means $g(I_1^{n-1},N) \cap S \neq \varnothing$ which is a contradiction. Thus $Q$ is an $n$-ary classical prime subhypermodule of $M$.
\end{proof}
\begin{theorem}
Let $Q$ be a proper subhypermodule of an $(m,n)$-hypermodule $M$ over $R$. Let $S$ be a nonempty subset of $ M-\{0\}$  such that for  hyperideals $I_1^{n-1}$ of $R$ and for subhypermodules $N_1,N_2$ of $M$,  $f(N_1,g(I_i,1^{(n-2)},N_2),0^{(m-2)}) \cap S \neq \varnothing$ for all $1 \leq i \leq n-1$ implies that $f(N_1,g(I_1^{n-1},N_2),0^{(m-2)}) \cap S \neq \varnothing$. If  $Q$ is maximal with respect to the property that $Q \cap S= \varnothing$, then $Q$ is an $n$-ary classical prime subhypermodule  of $M$.
\end{theorem}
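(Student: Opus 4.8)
The plan is to verify the hyperideal/subhypermodule characterization of $n$-ary classical primeness furnished by Theorem \ref{11}, arguing by contradiction in the spirit of the classical fact that an ideal maximal among those disjoint from a multiplicative set must be prime. Concretely, I would take hyperideals $I_1^{n-1}$ of $R$ and a subhypermodule $N$ of $M$ with $g(I_1^{n-1},N) \subseteq Q$, and aim to show that $g(I_i,1^{(n-2)},N) \subseteq Q$ for some $1 \leq i \leq n-1$. Suppose toward a contradiction that this inclusion fails for every $i$.

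First I would form, for each $1 \leq i \leq n-1$, the set $f(Q, g(I_i,1^{(n-2)},N), 0^{(m-2)})$. Since $Q$ and $g(I_i,1^{(n-2)},N)$ are both subhypermodules of $M$, this is again a subhypermodule; because $0 \in Q$ it contains $Q$, and it also contains $g(I_i,1^{(n-2)},N)$, which by assumption is not contained in $Q$. Hence the containment $Q \subsetneq f(Q, g(I_i,1^{(n-2)},N), 0^{(m-2)})$ is proper for each $i$. Maximality of $Q$ with respect to the property $Q \cap S = \varnothing$ then forces every one of these strictly larger subhypermodules to meet $S$, so that $f(Q, g(I_i,1^{(n-2)},N), 0^{(m-2)}) \cap S \neq \varnothing$ for all $1 \leq i \leq n-1$.

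At this point I would invoke the standing hypothesis on $S$ with $N_1 = Q$ and $N_2 = N$, which yields $f(Q, g(I_1^{n-1},N), 0^{(m-2)}) \cap S \neq \varnothing$. But the assumption $g(I_1^{n-1},N) \subseteq Q$ gives $f(Q, g(I_1^{n-1},N), 0^{(m-2)}) \subseteq f(Q,Q,0^{(m-2)}) \subseteq Q$, since $Q$ is closed under $f$ and contains $0$. Thus this last sum is disjoint from $S$, contradicting the previous line. Therefore $g(I_i,1^{(n-2)},N) \subseteq Q$ for some $i$, and by Theorem \ref{11} the subhypermodule $Q$ is $n$-ary classical prime. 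The routine verifications I would suppress are that $g(I_i,1^{(n-2)},N)$ and the ``sums'' $f(Q, -, 0^{(m-2)})$ are genuinely subhypermodules, which rest on the distributivity axioms of the hypermodule and the canonical $m$-ary hypergroup structure. The main obstacle I anticipate is exactly pinning down that $f(Q, g(I_i,1^{(n-2)},N), 0^{(m-2)})$ is a subhypermodule strictly larger than $Q$ precisely when $g(I_i,1^{(n-2)},N) \nsubseteq Q$; once this behaviour of sums of subhypermodules in the $(m,n)$-setting is secured, the maximality of $Q$ and the hypothesis on $S$ combine directly to produce the contradiction.
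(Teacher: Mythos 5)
Your proposal is correct and follows essentially the same route as the paper's own proof: assume $g(I_1^{n-1},N)\subseteq Q$ with no $g(I_i,1^{(n-2)},N)\subseteq Q$, use maximality of $Q$ to force $f(Q,g(I_i,1^{(n-2)},N),0^{(m-2)})\cap S\neq\varnothing$ for every $i$, apply the hypothesis on $S$ with $N_1=Q$, $N_2=N$, and derive the contradiction $Q\cap S\neq\varnothing$, concluding via the characterization in Theorem \ref{11}. Your write-up is in fact more careful than the paper's, since you make explicit the strict containment $Q\subsetneq f(Q,g(I_i,1^{(n-2)},N),0^{(m-2)})$ that justifies invoking maximality, a step the paper leaves implicit.
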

\begin{proof}
Assume that $g(I_1^{n-1},N) \subseteq Q$ for some hyperideals $I_1^{n-1}$ of $R$ and for a subhypermodule $N$ of $M$. Let $g(I_i,1^{(n-2)},N) \nsubseteq Q$ for all $1 \leq i \leq n - 1$. Then $f(Q,g(I_i,1^{(n-2)},N),0^{(m-2)}) \cap S \neq \varnothing$ for all $1 \leq i \leq n-1$ by the maximality of $Q$. This implies that $f(Q,g(I_1^{n-1},N),0^{(m-2)}) \cap S \neq \varnothing$ which means $Q \cap S \neq \varnothing$ which is a contradiction. Consequently, $Q$ is an $n$-ary classical prime subhypermodule  of $M$.
\end{proof}
Recall from \cite{18} that if $N$ is a subhypermodule of $(M,f,g)$ over $R$, then we consider the set $M/N$ as follows:
\[M/N=\{f(a,N,0^{(m-2)}) \ \vert \ a \in M \}.\]
Moreover, recall from \cite{18} that an element $a$ of  an $(m,n)$-hypermodule $M$ over $R$ is called torsion free if $g(r_1^{n-1},a)=0$, then there exists $1 \leq i \leq n-1 $ such that $r_i=0$. If all 
elements of $M$ are torsion free, then $M$ is called torsion free.
\begin{theorem}
Suppose that  $M$ is  an $(m,n)$-hypermodule  over $R$ such that every classical prime
subhypermodule of $M$ is an intersection of maximal subhypermodules of $M$ and $N$ is a subhypermodule of $M$. If $M/N$ is  a torsion free  $(m,n)$-hypermodule  over $R$, then every classical prime subhypermodule of $N$ is an intersection of maximal subhypermodules of $N$.
\end{theorem}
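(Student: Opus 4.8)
The plan is to reduce the statement about $N$ to the hypothesis about $M$ in two moves: first promote an $n$-ary classical prime subhypermodule of $N$ to one of $M$, using that $M/N$ is torsion free, and then push the resulting intersection of maximal subhypermodules of $M$ down into $N$.

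First I would take an arbitrary $n$-ary classical prime subhypermodule $Q$ of $N$ and show that $Q$ is in fact an $n$-ary classical prime subhypermodule of $M$. Let $a \in M$ and $r_1^{n-1} \in R$ with $g(r_1^{n-1}, a) \subseteq Q$. If $a \in N$, then the classical primeness of $Q$ inside $N$ applies verbatim and yields $g(r_i, 1^{(n-2)}, a) \subseteq Q$ for some $1 \leq i \leq n-1$. If $a \notin N$, then since $Q \subseteq N$ we have $g(r_1^{n-1}, a) \subseteq N$; passing to $M/N$, this says $G(r_1^{n-1}, f(a, N, 0^{(m-2)})) = \{N\}$ while the coset $f(a, N, 0^{(m-2)})$ is the nonzero element of $M/N$, so the torsion-freeness of $M/N$ forces $r_i = 0$ for some $i$, whence $g(r_i, 1^{(n-2)}, a) = \{0\} \subseteq Q$. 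Thus $Q$ is a classical prime subhypermodule of $M$, and it is proper since $Q \subsetneq N \subseteq M$. This case split is where the hypothesis on $M/N$ is consumed, and I expect it to be the conceptual heart of the argument.

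Next I would invoke the hypothesis to write $Q = \bigcap_{\lambda} K_\lambda$ with each $K_\lambda$ a maximal subhypermodule of $M$. Since $Q \subseteq N$, intersecting with $N$ gives $Q = Q \cap N = \bigcap_\lambda (K_\lambda \cap N)$. The terms with $N \subseteq K_\lambda$ contribute $K_\lambda \cap N = N$ and may be discarded; at least one index $\lambda$ must satisfy $N \not\subseteq K_\lambda$, for otherwise every $K_\lambda \supseteq N$ would give $Q \supseteq N$, contradicting that $Q$ is proper in $N$.

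Finally I would show that for each $\lambda$ with $N \not\subseteq K_\lambda$ the subhypermodule $K_\lambda \cap N$ is maximal in $N$, so that $Q = \bigcap_{N \not\subseteq K_\lambda}(K_\lambda \cap N)$ exhibits $Q$ as an intersection of maximal subhypermodules of $N$. The cleanest route is the correspondence coming from the canonical projection $M \longrightarrow M/K_\lambda$: because $K_\lambda$ is maximal, $M/K_\lambda$ has no proper nonzero subhypermodule, and the image of $N$ is nonzero (as $N \not\subseteq K_\lambda$), hence equals all of $M/K_\lambda$; this identifies $N/(K_\lambda \cap N)$ with the simple hypermodule $M/K_\lambda$ and forces $K_\lambda \cap N$ to be maximal in $N$. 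Equivalently, one may argue directly from the modular law for the lattice of subhypermodules: if $K_\lambda \cap N \subseteq L \subseteq N$, then $f(K_\lambda, L, 0^{(m-2)}) \in \{K_\lambda, M\}$ by maximality, and intersecting back with $N$ yields $L = K_\lambda \cap N$ or $L = N$. The hard part will be precisely this last step, since it requires an isomorphism-type theorem (or the modular law) for $(m,n)$-subhypermodules, which must be verified to hold in the hyperstructure setting rather than merely transported from classical module theory.
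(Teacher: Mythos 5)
Your proposal is correct and follows the same overall strategy as the paper's proof: lift the classical prime subhypermodule $Q$ of $N$ to a classical prime subhypermodule of $M$ via torsion-freeness of $M/N$ (identical case split on $a \in N$ versus $a \notin N$, with $r_i = 0$ forced in the second case), write $Q = \bigcap_{\lambda} K_\lambda$ with each $K_\lambda$ maximal in $M$, and show the traces $K_\lambda \cap N$ are maximal in $N$. The only divergence is in how this last step is justified. The paper does not invoke a modular law or an isomorphism theorem; it argues element-wise: given $x \in N - (K_i \cap N)$, maximality of $K_i$ gives $f(K_i, \langle x \rangle, 0^{(m-2)}) = M$, so any $a \in N$ lies in $f(a_i, g(r_1^{n-1},x), 0^{(m-2)})$ for some $a_i \in K_i$; reversibility in the canonical $m$-ary hypergroup then yields $a_i \in f(a, -g(r_1^{n-1},x), 0^{(m-2)}) \subseteq N$, hence $a_i \in K_i \cap N$, and therefore $f(K_i \cap N, \langle x \rangle, 0^{(m-2)}) = N$, which is maximality. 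This computation is exactly the verification your flagged ``hard part'' demands: the Dedekind-type identity $f(K_\lambda, L, 0^{(m-2)}) \cap N = f(K_\lambda \cap N, L, 0^{(m-2)})$ for $L \subseteq N$ is proved by the same reversibility chase, so your modular-law route closes up into the paper's argument rather than requiring new machinery; by contrast, your alternative route through the simplicity of $M/K_\lambda$ and a second isomorphism theorem would need that theorem to be established for $(m,n)$-hypermodules, which the paper avoids. One point in your favor: you handle the indices with $N \subseteq K_\lambda$ explicitly, whereas the paper silently assumes every $K_i \cap N$ is proper in $N$.
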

\begin{proof}
Assume that $Q$ is a classical prime subhypermodule of $N$. Let $g(r_1^{n-1},m) \subseteq Q$ for some $r_1^{n-1} \in R$ and $m \in M$. If $m \in N$, then  $Q$ is a classical prime subhypermodule of $M$. So  suppose that $m \notin N$. Then we have $g(r_1^{n-1},m) \subseteq Q \subseteq N$. Since $m \notin N$ and $M/N$ is  a torsion free  $(m,n)$-hypermodule  over $R$, we obtain $r_i=0$ for some $1 \leq i \leq n-1$. Therefore we get $g(r_i,1^{(n-2)},m) \subseteq Q$. This means that $Q$ is a classical prime subhypermodule of $M$. By the hypothesis, we infer that $Q$ is an intersection of maximal subhypermodules of $M$. Put $Q=\cap_{i \in I}K_i$ for the  maximal subhyperideals $K_i$ of $M$. Consider $Q_i=K_i \cap N$ for each $i \in I$. Clearly $Q=\cap_{i \in I}Q_i$, because $Q \subseteq N$. We assume that $Q_i \subset N$ for every $i \in I$. Let $x \in N-Q_i$ for some $i \in I$. This means $x \notin K_i$. By maximality $K_i$ of $M$, we conclude that $f(K_i,\langle x \rangle,0^{(m-2)})=M$. Assume that $a \in N$. Then there exists some $a_i \in K_i$ and $r_i^{n-1} \in R$ such that $a \in f(a_i,g(r_1^{n-1},x),0^{(m-2)})$. Thus we have $a_i \in f(a,-g(r_1^{n-1},x),0^{(m-2)}) \subseteq N$ which implies $a_i \in Q_i$. So $a \in f(a_i,\langle x \rangle,0^{(m-2)}) \in f(Q_i,\langle x \rangle,0^{(m-2)})$ which means $f(Q_i,\langle x \rangle,0^{(m-2)})=N$. Hence $Q_i$ is a maximal subhypermodule of $N$, as needed.
\end{proof}
\section{$n$-ary weakly classical prime subhypermodules}
\begin{definition}
Let $Q$ be a proper subhypermodule of   an $(m,n)$-hypermodule $M$ over $R$. $Q$ is called an  $n$-ary weakly classical prime subhypermodule if  $0 \notin g(r_1^{n-1},a) \subseteq Q$ for $r_1^{n-1} \in R$ and $a \in M$, then $g(r_i,1^{(n-2)},a) \subseteq Q$ for some $1 \leq i \leq n-1$.
\end{definition}
\begin{example}
Consider the commutative group $(H=\{0,x,y,z\},\oplus)$, where $\oplus$ is defined by 

\hspace{3cm}
\begin{tabular}{c|c} 
$\oplus$ & $0$ \ \ \ \ \  \ \ $x$ \ \ \ \ \ \ \ $y$ \ \ \ \ \  \ $z$
\\ \hline 
0 & $0$\ \ \ \ \  \ \ $x$ \ \ \ \ \ \ \ $y$\ \ \ \ \ \ \  $z$ 
\\ $x$ & $x$\ \ \ \ \  \ \  $0$  \ \ \ \ \ \ \ $z$ \ \ \ \ \ \ \ $y$
\\ $y$ & $y$\ \ \ \ \  \ \  $z$  \ \ \ \ \ \ \ $0$ \ \ \ \ \ \ \  $x$
\\ $z$ & $z$\ \ \ \ \  \ \  $y$  \ \ \ \ \ \ \ $x$ \ \ \ \ \ \ \  $0$
\end{tabular}\\ 
It is clear that $H$ is a $\mathbb{Z}$-module. Also, the ring of integers $\mathbb{Z}$ is a Krasner $(3,3)$-hyperring with $3$-ary hyperoperation $f^{\prime}(r_1^3)=\{r_1+r_2+r_3\}$ and $3$-ary operation $g^{\prime}(r_1^3)=r_1 \cdot r_2 \cdot r_3$ for all $r_1^3 \in \mathbb{Z}$. Now, we have the canonical $(3,3)$-hypermodule $(H,f,g)$ over $(\mathcal{Z},f^{\prime},g^{\prime})$ where $3$-ary hyperoperation $f$ and $3$-ary external hyperoperation $g$ on $H$ are defined as follows:

$f(a,a,a)=\{a\}, \hspace{0.3cm}$ for  $a \in H$

$f(0,a,a)=\{0\}, \hspace{0.3cm}$ for  $a \in H$

$f(a,a,b)=\{b\},  \hspace{0.3cm}$ for  $a, b \in H$

$f(a,b,c)=\{d\}, \hspace{0.3cm}$ for  $a \neq b \neq c \neq d \in H $\\
and 

$g(r_1^2,a)=\{\underbrace{a \oplus \cdots \oplus a}_{r_1 \cdot r_2}\}, \hspace{0.3cm}$   for $r_1^2 \in \mathbb{Z}$ and $a \in H$.\\ The subhypermodule $Q=\{0,y\}$ is a $3$-ary weakly classical prime subhypermodule of $H$.
\end{example}
\begin{theorem} \label{21}
Let $Q$ be an $n$-ary weakly classical prime subhypermodule of an $(m,n)$-hypermodule $M$ over $R$ and $a \in M-Q$ such that $F_a=\{0\}$. If 
$0 \neq g^{\prime}(r_1^n) \in Q_a$ for some $r_1^n \in R$, then $r_i \in Q_a$ for some $1 \leq i \leq n$.
\end{theorem}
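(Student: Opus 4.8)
The plan is to convert the membership $g^{\prime}(r_1^n)\in Q_a$ into a statement about the action $g$ on the single element $a$, and then to invoke the defining property of $Q$ (at most) twice. First I would unwind definitions: $g^{\prime}(r_1^n)\in Q_a$ means exactly $g(g^{\prime}(r_1^n),1^{(n-2)},a)\subseteq Q$. Two immediate reductions set the stage. Because $F_a=\{0\}$, from $g^{\prime}(r_1^n)\neq 0$ we obtain $0\notin g(g^{\prime}(r_1^n),1^{(n-2)},a)$; and axiom (4) of the hyperring (the zero element) will be used to keep track of nonvanishing, e.g.\ any combined product we form is nonzero as long as $g^{\prime}(r_1^n)\neq 0$. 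Thus we are in the regime $0\notin g(g^{\prime}(r_1^n),1^{(n-2)},a)\subseteq Q$, where the weakly classical prime hypothesis has teeth.

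The technical heart is the following associativity identity, which I would isolate as a preliminary computation: for scalars $s_1,\dots,s_{n-1}\in R$,
\[ g\bigl(g^{\prime}(s_1^{n-1},1),1^{(n-2)},a\bigr)=g(s_1^{n-1},a). \]
This follows from the mixed associativity axiom (iii) by regarding $s_1,\dots,s_{n-1},1$ as the $n$ factors of a single ring product and peeling off the last factor $1$: the induced inner action is $g(1^{(n-1)},a)=\{a\}$, a singleton, so no set-valued intermediate term appears. In words, acting by a product of at most $n-1$ scalars (viewed as one element of $R$) coincides with acting by those scalars jointly in the $n-1$ available slots. Keeping the peeled factor equal to $1$ is exactly what avoids a genuinely set-valued nesting.

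With this in hand, the main argument bridges the arity gap between the $n$-fold product $g^{\prime}(r_1^n)$ and the $(n-1)$-ary action $g$. I would fold the last two factors into one scalar, $u:=g^{\prime}(r_{n-1},r_n,1^{(n-2)})$, so that by commutativity and associativity of $g^{\prime}$ we have $g^{\prime}(r_1^n)=g^{\prime}(r_1,\dots,r_{n-2},u,1)$, a product of the $n-1$ scalars $r_1,\dots,r_{n-2},u$. Applying the identity above rewrites the hypothesis as $g(r_1,\dots,r_{n-2},u,a)\subseteq Q$ with $0\notin$ it, so the weakly classical prime property of $Q$ applies to the element $a$ with scalars $r_1,\dots,r_{n-2},u$. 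Hence either $g(r_j,1^{(n-2)},a)\subseteq Q$ for some $j\leq n-2$, giving $r_j\in Q_a$ and finishing, or $g(u,1^{(n-2)},a)\subseteq Q$, i.e.\ $u\in Q_a$.

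In the remaining case I would run the same scheme once more on the two-factor product $u=g^{\prime}(r_{n-1},r_n,1^{(n-2)})$. Note $u\neq 0$ (otherwise $g^{\prime}(r_1^n)=0$), so again $0\notin g(u,1^{(n-2)},a)$; and the identity now gives $g(r_{n-1},r_n,1^{(n-3)},a)\subseteq Q$. Applying the weakly classical prime property to $a$ with the scalars $r_{n-1},r_n,1^{(n-3)}$ leaves three options: $r_{n-1}\in Q_a$, or $r_n\in Q_a$, or a $1$-slot yields $g(1^{(n-1)},a)=\{a\}\subseteq Q$, the last being impossible since $a\in M-Q$. Either way some $r_i\in Q_a$, as required. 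The step I expect to be the main obstacle is establishing the associativity identity cleanly from the (awkwardly indexed) axiom (iii) and, with it, verifying that the two-stage combination of factors keeps every intermediate action single-valued and every $0\notin$ side-condition intact; this is precisely where the hypotheses $a\notin Q$ and $F_a=\{0\}$ are consumed.
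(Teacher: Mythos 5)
Your proposal is correct and follows essentially the same route as the paper's proof: both fold the last two factors into the single scalar $u=g^{\prime}(r_{n-1},r_n,1^{(n-2)})$, use the mixed associativity axiom to rewrite $g(g^{\prime}(r_1^n),1^{(n-2)},a)$ as $g(r_1^{n-2},u,a)$, apply the weakly classical prime property, and in the remaining case apply it once more to $g(r_{n-1},r_n,1^{(n-3)},a)\subseteq Q$, excluding the $1$-slot because $a\notin Q$. Your write-up is in fact slightly more careful than the paper's, since you explicitly verify $u\neq 0$ and hence $0\notin g(u,1^{(n-2)},a)$ via $F_a=\{0\}$ before the second application, a step the paper leaves implicit.
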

\begin{proof}
Assume that $Q$ is an $n$-ary weakly classical prime subhypermodule of an $(m,n)$-hypermodule $M$ over $R$ and $a \in M-Q$ such that $F_a=0$. Suppose that  $0 \neq g^{\prime}(r_1^n) \in Q_a$ for some $r_1^n \in R$ such that $r_2^n \notin Q_a$. We must show that $r _1 \in Q_a$. By $r_2^n \notin Q_a$ we conclude that $g(r_i,1^{(n-2)},a) \nsubseteq Q$ for all $2 \leq i \leq n$. From $g^{\prime}(r_1^n) \in Q_a$ it follows that  $0 \notin g(g^{\prime}(r_1^n),1^{(n-2)},a) \subseteq Q$ because $F_a=\{0\}$. This means $0 \notin g(g^{\prime}(r_1^{n-2},g^{\prime}(r_{n-1}^n,1^{(n-2)})),1^{(n-2)},a)=g(1^{(n-1)},g(r_1^{n-2},g^{\prime}(r_{n-1}^n,1^{(n-2)}),a)=g(r_1^{n-2},g^{\prime}(r_{n-1}^n,1^{(n-2)}),a) \subseteq Q$. Since $Q$ is an $n$-ary weakly classical prime subhypermodule of $M$, we get $g(r_i,1^{(n-2)},a) \subseteq Q$ for some $1 \leq i \leq n-2$ or $g(g^{\prime}(r_{n-1}^n,1^{(n-2)}),1^{(n-2)},a)=g(r_{n-1}^n,1^{(n-3)},a) \subseteq Q$. In the second possibilty, we obtain $r_{n-1} \in Q$ or $r_n \in Q$ as $1 \notin Q$ and the proof is completed. 
\end{proof}
\begin{theorem} \label{plo}
Let $P$ and $Q$ be two  subhypermodules of an $(m,n)$-hypermodule $M$ over $R$ such that $P \subset Q$. If $P$ is an $n$-ary weakly classical prime subhypermodule of $M$ and  $Q/P$ is an $n$-ary weakly classical prime subhypermodule of $M/P$, then $Q$ is an $n$-ary weakly classical prime subhypermodule of $M$.
\end{theorem}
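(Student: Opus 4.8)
The plan is to verify the defining condition of an $n$-ary weakly classical prime subhypermodule for $Q$ directly, splitting into two cases according to the position of $g(r_1^{n-1},a)$ relative to the smaller subhypermodule $P$. So I would start by taking $r_1^{n-1}\in R$ and $a\in M$ with $0\notin g(r_1^{n-1},a)\subseteq Q$, the goal being to produce an index $1\le i\le n-1$ with $g(r_i,1^{(n-2)},a)\subseteq Q$.

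First I would dispose of the case $g(r_1^{n-1},a)\subseteq P$. Here the standing hypothesis immediately gives $0\notin g(r_1^{n-1},a)\subseteq P$, so since $P$ is $n$-ary weakly classical prime in $M$ there is some $1\le i\le n-1$ with $g(r_i,1^{(n-2)},a)\subseteq P$, and because $P\subset Q$ this already yields $g(r_i,1^{(n-2)},a)\subseteq Q$, as required.

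The substantive case is $g(r_1^{n-1},a)\nsubseteq P$, where I would pass to the quotient $M/P$ and exploit that $Q/P$ is $n$-ary weakly classical prime there. Writing $\bar a=f(a,P,0^{(m-2)})$ for the image of $a$ and $\bar 0=P$ for the zero of $M/P$, the definition of $G$ gives $G(r_1^{n-1},\bar a)=\{f(z,P,0^{(m-2)})\mid z\in g(r_1^{n-1},a)\}$. From $g(r_1^{n-1},a)\subseteq Q$ together with $P\subset Q$ one reads off $G(r_1^{n-1},\bar a)\subseteq Q/P$, and from $g(r_1^{n-1},a)\nsubseteq P$ one wants $\bar 0\notin G(r_1^{n-1},\bar a)$. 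Applying the weakly classical prime property of $Q/P$ would then deliver an index $i$ with $G(r_i,1^{(n-2)},\bar a)\subseteq Q/P$, which unwinds (again using $P\subset Q$, since $f(z,P,0^{(m-2)})\in Q/P$ forces $z\in Q$) to $g(r_i,1^{(n-2)},a)\subseteq Q$, completing the argument.

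I expect the main obstacle to be the bookkeeping at the quotient level, specifically the justification of $\bar 0\notin G(r_1^{n-1},\bar a)$. The clean implication $g(r_1^{n-1},a)\nsubseteq P\Rightarrow\bar 0\notin G(r_1^{n-1},\bar a)$ really needs the sharper statement $g(r_1^{n-1},a)\cap P=\varnothing$, because $\bar 0\in G(r_1^{n-1},\bar a)$ as soon as a single element of $g(r_1^{n-1},a)$ lies in $P$. I would therefore scrutinize whether $g(r_1^{n-1},a)$ can straddle $P$ under the hypothesis $0\notin g(r_1^{n-1},a)$; if such straddling is genuinely possible, the dichotomy must be refined to separate $g(r_1^{n-1},a)\cap P=\varnothing$ from $g(r_1^{n-1},a)\subseteq P$ and to rule out any intermediate configuration using the coset structure of $M/P$. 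By contrast, the forward translation $G(r_i,1^{(n-2)},\bar a)\subseteq Q/P\Rightarrow g(r_i,1^{(n-2)},a)\subseteq Q$ and the entire first case are routine and carry no real difficulty.
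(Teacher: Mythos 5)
Your strategy coincides with the paper's own proof: the same case split on the position of $g(r_1^{n-1},a)$ relative to $P$, the same use of the weakly classical primeness of $P$ when $g(r_1^{n-1},a)\subseteq P$, and the same passage to $M/P$ together with the weakly classical primeness of $Q/P$ otherwise, followed by the same (routine) unwinding back to $Q$.

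The point you flag at the end deserves emphasis, because it is not an artifact of your write-up: it is precisely where the paper's proof is loose. The paper infers, from $g(r_1^{n-1},a)\nsubseteq P$ alone, that $0\neq G(r_1^{n-1},f(a,P,0^{(m-2)}))\subseteq Q/P$ and then invokes the hypothesis on $Q/P$. But the zero of $M/P$ is the coset $P=f(0,P,0^{(m-2)})$, and since $G(r_1^{n-1},f(a,P,0^{(m-2)}))=\{f(z,P,0^{(m-2)})\ \vert\ z\in g(r_1^{n-1},a)\}$, the zero coset lies in this set as soon as $g(r_1^{n-1},a)\cap P\neq\varnothing$ (in a canonical $m$-ary hypergroup one has $f(z,P,0^{(m-2)})=P$ exactly when $z\in P$). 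Because $g$ takes values in $P^{*}(M)$ and may be genuinely multivalued, no axiom excludes the straddling configuration $g(r_1^{n-1},a)\cap P\neq\varnothing$ with $g(r_1^{n-1},a)\nsubseteq P$; in that configuration neither hypothesis applies, since $P$'s primeness needs containment in $P$ while $Q/P$'s needs the zero coset to be absent. So the correct dichotomy is $g(r_1^{n-1},a)\cap P=\varnothing$ versus $g(r_1^{n-1},a)\cap P\neq\varnothing$, and the intermediate case requires a separate argument; it vanishes only under extra assumptions, e.g.\ when $g$ is single-valued (as in all of the paper's examples) or when $P=\{0\}$. In short, your scrutiny is on target, the paper offers no resolution of this case, and a fully rigorous proof along this route must either rule out straddling or handle it explicitly.
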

\begin{proof}
Assume that $0 \notin g(r_1^{n-1},a) \subseteq Q$ for $r_1^{n-1} \in R$ and $a \in M$. If $g(r_1^{n-1},a) \subseteq P$, then we are done. Suppose that $g(r_1^{n-1},a) \nsubseteq P$. So $0 \neq G(r_1^{n-1},f(a,P,0^{(m-2)}))=\{f(g(r_1^{n-1},a),P,0^{(m-2)}\} \subseteq Q/P$. Since $Q/P$ is an $n$-ary weakly classical prime subhypermodule of $M/P$, then we conclude that $G(r_i,1^{(n-2)},f(a,P,0^{(m-2)})) =\{f(g(r_i,1^{(n-2)},a),P,0^{(m-2)})\} \subseteq Q/P$ for some $1 \leq i \leq n-1$ which implies $g(r_i,1^{(n-2)},a) \subseteq Q$, as needed.
\end{proof}
Next, we observe that weakly classical prime subhypermodules behave naturally under a homomorphism.
\begin{theorem} \label{22}
Let $(M_1,f_1,g_1)$ and $(M_2,f_2,g_2)$ be two $(m,n)$-hypermodules over $(R,f^{\prime},g^{\prime})$ and let $Q_1,Q_2$ be $n$-ary  weakly classical prime subhypermodules of $M_1, M_2$, respectively. If $h:M_1 \longrightarrow M_2$ is a homomorphism,
then:
\begin{itemize} 
\item[\rm(1)]~If $h$ is an epimorphism and $Ker (h) \subseteq Q_1$, then $h(Q_1)$ is an $n$-ary  weakly classical prime subhypermodule of $M_2$.
\item[\rm(2)]~ If $h$ is a monomorphism with $h^{-1}(Q_2) \neq M_1$, then $h^{-1}(Q_2)$ is an $n$-ary  weakly classical prime subhypermodule of $M_1$.
\end{itemize} 
\end{theorem}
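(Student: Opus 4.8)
The plan is to transport the defining condition $0\notin g(r_1^{n-1},a)\subseteq Q$ through $h$ in both directions, exploiting the homomorphism identity $h(g_1(r_1^{n-1},a))=g_2(r_1^{n-1},h(a))$ and then invoking the weakly classical prime hypothesis on the appropriate hypermodule. Throughout I would use the standard facts, provable directly from Definition of a hypermodule homomorphism, that the image of a subhypermodule under an epimorphism is a subhypermodule and that the preimage of a subhypermodule is a subhypermodule. The only genuinely new work, compared with the classical prime case in Theorem \ref{11}, is keeping track of the clause $0\notin(\cdots)$, and this is precisely where the hypotheses $\mathrm{Ker}(h)\subseteq Q_1$ and injectivity of $h$ enter.

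For (1), I would first verify that $h(Q_1)$ is a \emph{proper} subhypermodule of $M_2$: since $\mathrm{Ker}(h)\subseteq Q_1$ and $Q_1$ is a subhypermodule, one has $h^{-1}(h(Q_1))=Q_1$, so $h(Q_1)=M_2$ would force $Q_1=h^{-1}(M_2)=M_1$, contradicting properness of $Q_1$. Next, suppose $0\notin g_2(r_1^{n-1},b)\subseteq h(Q_1)$. Using surjectivity write $b=h(a)$, so $g_2(r_1^{n-1},b)=h(g_1(r_1^{n-1},a))$. The inclusion $h(g_1(r_1^{n-1},a))\subseteq h(Q_1)$ together with $\mathrm{Ker}(h)\subseteq Q_1$ gives $g_1(r_1^{n-1},a)\subseteq Q_1$: for $t$ in the left-hand set, $h(t)=h(q)$ for some $q\in Q_1$, whence $0\in h(f_1(t,-q,0^{(m-2)}))$ yields an element of $\mathrm{Ker}(h)\subseteq Q_1$ in $f_1(t,-q,0^{(m-2)})$, and the canonical hypergroup axioms then place $t$ in $Q_1$. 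Moreover $0\notin g_1(r_1^{n-1},a)$, for otherwise $0=h(0)\in g_2(r_1^{n-1},b)$. Applying the weakly classical prime property of $Q_1$ gives $g_1(r_i,1^{(n-2)},a)\subseteq Q_1$ for some $i$, and pushing this forward through $h$ yields $g_2(r_i,1^{(n-2)},b)=h(g_1(r_i,1^{(n-2)},a))\subseteq h(Q_1)$, as required.

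For (2), $h^{-1}(Q_2)$ is a subhypermodule and is proper by the hypothesis $h^{-1}(Q_2)\neq M_1$. Suppose $0\notin g_1(r_1^{n-1},a)\subseteq h^{-1}(Q_2)$. Then $g_2(r_1^{n-1},h(a))=h(g_1(r_1^{n-1},a))\subseteq Q_2$, and since $h$ is injective (so $h(t)=0$ forces $t=0$), the assumption $0\notin g_1(r_1^{n-1},a)$ yields $0\notin g_2(r_1^{n-1},h(a))$. The weakly classical prime property of $Q_2$ then supplies an index $i$ with $g_2(r_i,1^{(n-2)},h(a))\subseteq Q_2$, that is $h(g_1(r_i,1^{(n-2)},a))\subseteq Q_2$, which is exactly $g_1(r_i,1^{(n-2)},a)\subseteq h^{-1}(Q_2)$.

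The step I expect to require the most care is the implication $h(g_1(r_1^{n-1},a))\subseteq h(Q_1)\Rightarrow g_1(r_1^{n-1},a)\subseteq Q_1$ in part (1): in a hyperstructure this must be argued elementwise using that $Q_1$ absorbs the kernel, and this is where $\mathrm{Ker}(h)\subseteq Q_1$ is essential, since without it one only controls $g_1(r_1^{n-1},a)$ up to the kernel. Verifying the preservation of the nonzero clause in each direction — automatic in the ordinary module setting — is the other point to record explicitly, as it is exactly what separates the weakly classical prime case from the classical one.
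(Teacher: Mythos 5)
Your proposal is correct and follows essentially the same route as the paper's proof: transport the condition $0\notin g(r_1^{n-1},a)\subseteq Q$ through $h$, apply the weakly classical prime hypothesis on the other side, and transport the resulting inclusion back. You are in fact more careful than the paper, which asserts the implication $h(g_1(r_1^{n-1},a_1))\subseteq h(Q_1)\Rightarrow g_1(r_1^{n-1},a_1)\subseteq Q_1$ and the preservation of the nonzero clause in both parts without justification, whereas you supply the kernel/reversibility argument and the $0=h(0)$ observation explicitly.
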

\begin{proof}
$(1)$  Let $0 \notin g_2(r_1^{n-1},a_2) \subseteq h(Q_1)$ for $r_1^{n-1} \in R$ and $a_2 \in M_2$. Since $h$ is an epimorphism, then there exists $a_1 \in M_1$ such that $h(a_1)=a_2$. Hence we get 
\[h(g_1(r_1^{n-1},a_1))=g_2(r_1^{n-1},h(a_1))=g_2(r_1^{n-1},a_2) \subseteq h(Q_1) \]
which means $g_1(r_1^{n-1},a_1) \subseteq Q_1$.  
Since  $Q_1$ is an $n$-ary  weakly classical prime subhypermodules of $M_1$ and $0 \notin g_1(r_1^{n-1},a_1)$, it follows that $g_1(r_i,1^{(n-2)},a_1) \subseteq Q_1$ for some $1 \leq i \leq n-1$. Therefore
\[g_2(r_i,1^{(n-2)},a_2)=g_2(r_i,1^{(n-2)},h(a_1))=h(g_1(r_i,1^{(n-2)},a_1)) \subseteq h(Q_1).\]
Thus $h(Q_1)$ is an $n$-ary  weakly classical prime subhypermodule of $M_2$.\\
$(2)$ Let $Q_2$ be an $n$-ary  weakly classical prime subhypermodules of $M_2$. Let $0 \notin g_1(r_1^{n-1},a_1) \subseteq h^{-1}(Q_2)$ for $r_1^{n-1} \in R$ and $a_2 \in M_2$. Since $h$ is a monomorphism, we conclude that $0 \notin h(g_1(r_1^{n-1},a_1)=g_2(r_1^{n-1},h(a_1)) \subseteq Q_2$.  Since $Q_2$ is an $n$-ary  weakly classical prime subhypermodules of $M_2$, we have $g_2(r_i,1^{(n-2)},h(a_1)) \subseteq Q_2$ for some $1 \leq i \leq n-1$ and so $h(g_1(r_i,1^{(n-2)},a_1)) \subseteq Q_2$. Hence $g_1(r_i,1^{(n-2)},a_1) \subseteq h^{-1}(Q_2)$ for some $1 \leq i \leq n-1$. Therefore $h^{-1}(Q_2)$ is an $n$-ary  weakly classical prime subhypermodule of $M_1$.
\end{proof}
As an immediate consequence of the previous theorem, we have the following  result.
\begin{corollary} \label{23}
Let $P$ and $Q$ be two  subhypermodules of an $(m,n)$-hypermodule $M$ over $R$ such that $P \subset Q$. If $Q$ is an $n$-ary weakly classical prime subhypermodule of $M$, then $Q/P$ is an $n$-ary weakly classical prime subhypermodule of $M/P$.
\end{corollary}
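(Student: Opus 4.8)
The plan is to realize $Q/P$ and $M/P$ through the canonical projection and then invoke Theorem \ref{22}(1), exactly as the phrase ``immediate consequence'' suggests. First I would introduce the natural map $\pi : M \longrightarrow M/P$ given by $\pi(a) = f(a,P,0^{(m-2)})$. Using the formulas defining the operations $F$ and $G$ on the quotient $(m,n)$-hypermodule, one checks that $\pi$ respects $f$ and $g$, that is $\pi(f(a_1^m)) = F(\pi(a_1),\dots,\pi(a_m))$ and $\pi(g(r_1^{n-1},a)) = G(r_1^{n-1},\pi(a))$. These are precisely the identities by which $F$ and $G$ were defined, so $\pi$ is a homomorphism of $(m,n)$-hypermodules, and it is surjective by the very description $M/P = \{f(a,P,0^{(m-2)}) \mid a \in M\}$.

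Next I would identify the kernel of $\pi$. The zero of $M/P$ is the class $P = f(0,P,0^{(m-2)})$, and from the canonical $m$-ary hypergroup structure of $(M,f)$ one obtains $f(a,P,0^{(m-2)}) = P$ if and only if $a \in P$; hence $Ker(\pi) = P$. Since $P \subset Q$ by hypothesis, the containment $Ker(\pi) \subseteq Q_1$ required in Theorem \ref{22}(1) holds with the dictionary $M_1 = M$, $M_2 = M/P$, and $Q_1 = Q$, where $Q$ is an $n$-ary weakly classical prime subhypermodule of $M$ exactly as the hypothesis demands.

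Finally, applying Theorem \ref{22}(1) to the epimorphism $\pi$ with $Ker(\pi) = P \subseteq Q$ yields that $\pi(Q)$ is an $n$-ary weakly classical prime subhypermodule of $M/P$. It remains only to note that $\pi(Q) = \{f(a,P,0^{(m-2)}) \mid a \in Q\} = Q/P$, which completes the argument.

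As for the main obstacle, the argument is genuinely short, so the only place demanding care is the bookkeeping that $\pi$ is a well-defined $(m,n)$-hypermodule epimorphism whose kernel is exactly $P$. In particular one must confirm that $F$ and $G$ really are the push-forwards of $f$ and $g$ along $\pi$, so that the two homomorphism identities hold automatically, and that no element outside $P$ collapses to the zero class. Both facts follow from the canonicity of $(M,f)$ together with the definition of $M/P$, but they are the steps worth making explicit, since the whole corollary rests on them.
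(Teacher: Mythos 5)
Your proposal is correct and follows essentially the same route as the paper: both use the canonical projection $\pi: M \longrightarrow M/P$, $a \mapsto f(a,P,0^{(m-2)})$, note that it is an epimorphism with $Ker(\pi)=P\subseteq Q$, and then apply Theorem \ref{22}(1) to conclude $\pi(Q)=Q/P$ is $n$-ary weakly classical prime. The only difference is cosmetic: the paper cites an external result (Theorem 3.2 of \cite{sorc3}) for the fact that $\pi$ is an epimorphism, whereas you verify it directly from the definitions of $F$ and $G$, which is a harmless and arguably more self-contained bookkeeping step.
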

\begin{proof}
Consider the mapping $\pi:M \longrightarrow M/P$  defined by $a \longrightarrow f(a,P,0^{(n-2)})$. Then $\pi$ is an epimorphism by Theorem 3.2 in \cite{sorc3}. Suppose that $Q$ is an $n$-ary weakly classical prime subhypermodule of $M$. Since $Ker (\pi)=P \subset Q$ and $\pi$ is onto, we conclude that $\pi(Q)=Q/P$ is an $n$-ary weakly classical prime subhypermodule of $M/P$ by Theorem \ref{22} (1).
\end{proof}

Assume that $Q$ is an $n$-ary weakly classical prime subhypermodule of an $(m,n)$-hypermodule $M$ over $R$. Then $(r_1^{n-1},X)$ for $r_1^{n-1} \in R$ and some non empty subset $X$ of $M$ is called a classical $(m,n)$-zero of $Q$  if $ 0 \in g(r_1^{n-1},X) \subseteq Q$ and $g(r_i,1,X) \nsubseteq Q$ for all $1 \leq i \leq n-1$.

\begin{theorem} \label{24}
 Let $Q$ be an $3$-ary weakly classical prime subhypermodule of an $(3,3)$-hypermodule $M$ over $R$ and let $g(r_1^2,P) \subseteq Q$ for some subhypermodule $P$ of $M$ and $r_1^2 \in R$. If $(r_1^2,X)$ is not a classical $(3,3)$-zero of $Q$ for every non empty subset $X$ of $P$, then   $g(r_i,1,P) \subseteq Q$ for some  $i \in \{1,2\}$.
\end{theorem}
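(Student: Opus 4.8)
The plan is to argue by contraposition: supposing that $g(r_1,1,P)\nsubseteq Q$ and $g(r_2,1,P)\nsubseteq Q$ hold simultaneously, I will manufacture a non-empty subset $X$ of $P$ that is a classical $(3,3)$-zero of $Q$, contradicting the standing hypothesis. From the two failures of containment I fix $a,b\in P$ with $g(r_1,1,a)\nsubseteq Q$ and $g(r_2,1,b)\nsubseteq Q$; note that both are nonzero, since $g(r_i,1,0)=\{0\}\subseteq Q$.

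First I would establish the covering $P=P_1\cup P_2$, where $P_1=\{p\in P : g(r_1,1,p)\subseteq Q\}$ and $P_2=\{p\in P : g(r_2,1,p)\subseteq Q\}$. Fix $p\in P$; then $g(r_1^2,p)\subseteq g(r_1^2,P)\subseteq Q$. If $0\notin g(r_1^2,p)$, the weakly classical primeness of $Q$ yields $g(r_1,1,p)\subseteq Q$ or $g(r_2,1,p)\subseteq Q$. If instead $0\in g(r_1^2,p)$, then the singleton $\{p\}$ satisfies $0\in g(r_1^2,\{p\})\subseteq Q$, so the assumption that $(r_1^2,\{p\})$ is not a classical zero forces $g(r_1,1,p)\subseteq Q$ or $g(r_2,1,p)\subseteq Q$. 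Hence $p\in P_1\cup P_2$ in either case. Consequently the witnesses can be chosen sharp: $a\in P_2\setminus P_1$ and $b\in P_1\setminus P_2$, so that $g(r_2,1,a)\subseteq Q$, $g(r_1,1,a)\nsubseteq Q$, $g(r_1,1,b)\subseteq Q$, and $g(r_2,1,b)\nsubseteq Q$.

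Next I would form the combined set, the hyperoperation analogue of the element $a+b$ used in the classical module argument: set $X=f(a,b,0)\cup\{0\}\subseteq P$. Distributivity of $g$ over $f$ gives $g(r_1^2,X)=f(g(r_1^2,a),g(r_1^2,b),0)\cup\{0\}$; since $g(r_1^2,a),g(r_1^2,b)\subseteq Q$, since $0\in Q$, and since $(Q,f)$ is an $m$-ary subhypergroup, this lies in $Q$, and it plainly contains $0$, so $0\in g(r_1^2,X)\subseteq Q$. For the mixed products I would invoke reversibility of the canonical hypergroup: choosing $t\in g(r_1,1,a)\setminus Q$ and any $s\in g(r_1,1,b)\subseteq Q$, if $f(t,s,0)\subseteq Q$ then reversibility together with $-s\in Q$ would force $t\in f(Q,Q,0)\subseteq Q$, a contradiction; thus $g(r_1,1,X)\supseteq f(g(r_1,1,a),g(r_1,1,b),0)\nsubseteq Q$, and the symmetric argument using $g(r_2,1,a)\subseteq Q$ and $g(r_2,1,b)\nsubseteq Q$ gives $g(r_2,1,X)\nsubseteq Q$. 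Therefore $(r_1^2,X)$ is a classical $(3,3)$-zero of $Q$, the desired contradiction.

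The step I expect to be the main obstacle is precisely the passage from elements to sets. In the classical setting one simply tests the single element $a+b$, which at once witnesses that $r_1$ and $r_2$ both fail to carry $P$ into the submodule, contradicting the covering $P=P_1\cup P_2$; here $f(a,b,0)$ is a hyperproduct set and in general no single element of it is bad for both $r_1$ and $r_2$, so an element-level contradiction against the covering is unavailable. This is exactly why the classical-zero condition is formulated for subsets, and it localizes the two technical points: (i) deducing $g(r_i,1,X)\nsubseteq Q$, which rests on reversibility and the closure of $Q$ under $f$ and inverses, and (ii) guaranteeing $0\in g(r_1^2,X)$, which is not automatic for $f(a,b,0)$ alone and is what makes the harmless adjunction of $0$ to $X$ necessary.
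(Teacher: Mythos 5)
Your proposal is correct, and its skeleton coincides with the paper's own proof: argue by contradiction, fix witnesses $a,b\in P$ (the paper's $p_1,p_2$) with $g(r_1,1,a)\nsubseteq Q$ and $g(r_2,1,b)\nsubseteq Q$, derive the cross-containments $g(r_2,1,a)\subseteq Q$ and $g(r_1,1,b)\subseteq Q$ by exactly the dichotomy you use (weak classical primeness when $0\notin g(r_1^2,p)$, the no-classical-zero hypothesis on the singleton $\{p\}$ when $0\in g(r_1^2,p)$), and then test the hyperproduct $f(a,b,0)$. Where you genuinely diverge is the endgame. The paper applies the dichotomy once more to the set $f(p_1,p_2,0)$ itself, asserting that $g(r_1^2,f(p_1,p_2,0))\subseteq Q$ forces $g(r_i,1,f(p_1,p_2,0))\subseteq Q$ for a uniform $i\in\{1,2\}$, and then extracts the contradiction from that containment (implicitly by the same reversibility computation you spell out). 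That assertion is the tersest point of the paper's argument: when $0\notin g(r_1^2,f(p_1,p_2,0))$ the no-zero hypothesis does not apply, and weak classical primeness only yields a per-element choice of $i$ rather than a uniform one, so an additional element-wise reversibility step is silently needed there. Your adjoin-zero device --- taking $X=f(a,b,0)\cup\{0\}\subseteq P$ so that $0\in g(r_1^2,X)$ holds automatically --- removes this case split entirely: you run reversibility in the opposite direction to show $g(r_1,1,X)\nsubseteq Q$ and $g(r_2,1,X)\nsubseteq Q$, so that $(r_1^2,X)$ is an explicit classical $(3,3)$-zero inside $P$, contradicting the hypothesis head-on. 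Thus the two proofs use the same ingredients (the singleton dichotomy, the combined set, reversibility, and closure of $Q$ under $f$ and inverses), but your version constructs the forbidden classical zero directly and is the more watertight finish, whereas the paper re-applies the hypothesis to obtain a containment and leaves one subcase to the reader.
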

\begin{proof}
 Let $g(r_1^2,P) \subseteq Q$ but $g(r_i,1,P) \nsubseteq Q$ for each $i \in \{1,2\}$. This implies that for each $i \in \{1,2\}$ there exists $p_i \in P$ such that $g(r_i,1,p_i) \nsubseteq Q$. If $0 \notin g(r_1^2,p_1) \subseteq Q$, then $g(r_2,1,p_1) \subseteq Q$ since $Q$ is an $n$-ary weakly classical prime subhypermodule of $M$ and $g(r_1,1,p_1) \nsubseteq Q$. If $0 \in g(r_1^2,p_1) \subseteq Q$, then $g(r_2,1,p_1) \subseteq Q$ since $g(r_1^2,p_1)$ is not a classical $(3,3)$-zero of $Q$. Similarly, we can conclude that $g(r_1,1,p_2) \subseteq Q$. Therefore we have $g(r_1^2,f(p_1^2,0)) \subseteq Q$. This implies that $g(r_i,1,f(p_1^2,0)) \subseteq Q$ for some $i \in \{1,2\}$ which means $f(g(r_i,1,p_1),g(r_i,1,p_2),0) \subseteq Q$. If $i=1$, then we get $g(r_1,1,p_1) \subseteq Q$ which is a contradiction. If $i=2$, then we obtain $g(r_2,1,p_2) \subseteq Q$, a contradiction. Hence $g(r_i,1,P) \subseteq Q$ for some  $i \in \{1,2\}$.
\end{proof}
Suppose that $Q$ is an $n$-ary weakly classical prime subhypermodule of an $(m,n)$-hypermodule $M$ over $R$.  Let $g(I_1^{n-1},P) \subseteq Q$ for some hyperideals $I_1^{n-1}$ of $R$ and some  subhypermodule $P$ of $M$. $Q$ is called a free classical $(m,n)$-zero with respect to $g(I_1^{n-1},P)$  if $g(r_1^{n-1},X)$ is not classical $(m,n)$-zero of $Q$ for every $r_i \in I_i$ and  for every non empty subset $X$ of $P$.

\begin{corollary} \label{25}
Let $Q$ be an $3$-ary weakly classical prime subhypermodule of an $(3,3)$-hypermodule $M$ over $R$ and let $g(I_1^2,P) \subseteq Q$ for some hyperideals $I_1^2$ of $R$ and some subhypermodule $P$ of $M$. If $Q$ is a free classical $(3,3)$-zero with respect to $g(I_1^2,P)$, then $g(I_i,1,P) \subseteq Q$ for some $i \in \{1,2\}$. 
\end{corollary}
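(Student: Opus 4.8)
The plan is to deduce the corollary directly from Theorem \ref{24} by a proof by contradiction that isolates a single pair of scalars witnessing the simultaneous failure of both conclusions. First I would assume, toward a contradiction, that both $g(I_1,1,P) \nsubseteq Q$ and $g(I_2,1,P) \nsubseteq Q$. Unwinding the definition $g(A_1^{n-1},M') = \bigcup\{g(r_1^{n-1},m)\}$ of the external hyperoperation on subsets, each non-containment yields a witness: there is an $s_1 \in I_1$ with $g(s_1,1,P) \nsubseteq Q$ and an $s_2 \in I_2$ with $g(s_2,1,P) \nsubseteq Q$. This reduction from the hyperideals $I_1,I_2$ to individual scalars $s_1,s_2$ is what makes the single-scalar Theorem \ref{24} applicable, and it is the one step that must be handled with care, since a priori the witnesses for $I_1$ and $I_2$ arise independently.

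Next I would verify that the fixed pair $(s_1,s_2)$ meets both hypotheses of Theorem \ref{24}. Since $s_1 \in I_1$ and $s_2 \in I_2$, monotonicity of $g$ in its subset arguments gives $g(s_1,s_2,P) \subseteq g(I_1^2,P) \subseteq Q$. Moreover, because $Q$ is a free classical $(3,3)$-zero with respect to $g(I_1^2,P)$, the defining property applied to $r_1 = s_1 \in I_1$ and $r_2 = s_2 \in I_2$ guarantees that $g(s_1,s_2,X)$ is not a classical $(3,3)$-zero of $Q$ for every nonempty subset $X$ of $P$. Hence both premises of Theorem \ref{24} hold for $(s_1,s_2)$.

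Applying Theorem \ref{24} to $(s_1,s_2)$ then forces $g(s_i,1,P) \subseteq Q$ for some $i \in \{1,2\}$, which directly contradicts the choice of $s_1$ and $s_2$. Therefore the assumption is untenable and $g(I_i,1,P) \subseteq Q$ for some $i \in \{1,2\}$, as claimed. I expect the only genuinely delicate point to be the extraction of the witnesses $s_1,s_2$ together with the check that they lie in the correct hyperideals, so that both the containment $g(s_1,s_2,P) \subseteq Q$ and the free-classical-zero condition transfer cleanly from the hyperideal hypotheses to the scalar setting; once that is in place, the corollary is a one-line invocation of the preceding theorem.
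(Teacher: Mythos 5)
Your proposal is correct and takes essentially the same approach as the paper: a proof by contradiction that extracts witnesses $s_i \in I_i$ with $g(s_i,1,P) \nsubseteq Q$, notes $g(s_1,s_2,P) \subseteq g(I_1^2,P) \subseteq Q$, and invokes Theorem \ref{24} via the free classical $(3,3)$-zero hypothesis to reach a contradiction. Your write-up is in fact slightly more careful than the paper's, since it makes explicit the monotonicity step and the verification that the pair $(s_1,s_2)$ inherits the not-a-classical-zero condition, both of which the paper leaves implicit.
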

\begin{proof}
Let $g(I_i,1,P) \nsubseteq Q$ for each $i \in \{1,2\}$. Then there exists $r_i \in I_i$ for each $i \in \{1,2\}$ such that $g(r_i,1,P) \nsubseteq Q$. So we have $g(r_1^2,P) \subseteq Q$. By Theorem \ref{24}, we get $g(r_i,1,P) \subseteq Q$ for some $i \in \{1,2\}$ since $Q$ is a free classical $(3,3)$-zero with respect to $g(I_1^2,P)$. This is a contradiction. Thus $g(I_i,1,P) \subseteq Q$ for some $i \in \{1,2\}$. 
\end{proof}
\begin{theorem}
Let $Q$ be an $n$-ary weakly classical prime subhypermodule of an $(m,n)$-hypermodule $M$ over $R$. Then $Q_{g(r_1^{n-1},a)} \subseteq F_{g(r_1^{n-1},a)} \cup Q_{g(r_1,1^{(n-2)},a)} \cup \cdots \cup Q_{g(r_{n-1},1^{(n-2)},a)}$ for all $r_1^{n-1} \in R$ and $a \in M$.
\end{theorem}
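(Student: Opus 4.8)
The plan is to fix $r_1^{n-1}\in R$ and $a\in M$, write $b=g(r_1^{n-1},a)$ for the relevant subset of $M$, and chase an arbitrary scalar $s\in Q_b$ through the case split dictated by the weakly classical prime hypothesis. Here I read the set-valued subscripts in the natural way: $s\in Q_b$ means $g(s,1^{(n-2)},b)\subseteq Q$ (using $g(s,1^{(n-2)},b)=\bigcup_{x\in b}g(s,1^{(n-2)},x)$), while $s\in F_b$ means $s\neq 0$ and $0\in g(s,1^{(n-2)},b)$. With these conventions the asserted containment is precisely the claim that every $s\in Q_b$ lying outside $F_b$ must belong to some $Q_{g(r_i,1^{(n-2)},a)}$.

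So I would fix $s\in Q_b\setminus F_b$. If $s=0$ then $g(0,1^{(n-2)},\cdot)=\{0\}\subseteq Q$, so $s$ lies in every $Q_{g(r_i,1^{(n-2)},a)}$ and there is nothing to prove; hence I assume $s\neq 0$, which together with $s\notin F_b$ forces $0\notin g(s,1^{(n-2)},b)$. The decisive step is to regroup the nested scalar action: using the mixed associativity axiom of the hypermodule together with the scalar identity $1$, one rewrites
\[
g\big(s,1^{(n-2)},g(r_1^{n-1},a)\big)=g\big(g^{\prime}(s,r_1,1^{(n-2)}),r_2^{n-1},a\big),
\]
and puts $t=g^{\prime}(s,r_1,1^{(n-2)})$. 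By this identity $0\notin g(t,r_2^{n-1},a)\subseteq Q$, so the weakly classical prime property applies to the single element $a$ with the $n-1$ scalars $t,r_2,\dots,r_{n-1}$.

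The property then gives two kinds of outcome. If $g(r_i,1^{(n-2)},a)\subseteq Q$ for some $2\le i\le n-1$, then since $Q$ is a subhypermodule it absorbs scalar multiplication, whence $g(s,1^{(n-2)},g(r_i,1^{(n-2)},a))\subseteq g(s,1^{(n-2)},Q)\subseteq Q$, i.e. $s\in Q_{g(r_i,1^{(n-2)},a)}$. If instead $g(t,1^{(n-2)},a)\subseteq Q$, then splitting $t$ back by the same associativity gives $g(s,1^{(n-2)},g(r_1,1^{(n-2)},a))\subseteq Q$, i.e. $s\in Q_{g(r_1,1^{(n-2)},a)}$. In either case $s$ lands in the right-hand union, and combining with the trivial case $s\in F_b$ finishes the argument.

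I expect the only real friction to be the bookkeeping in the associativity step, namely making the regrouping $g(s,1^{(n-2)},g(r_1^{n-1},a))=g(g^{\prime}(s,r_1,1^{(n-2)}),r_2^{n-1},a)$ and its inverse precise from the defining axioms; this is exactly the kind of manipulation already carried out in the proof of Theorem \ref{21}, so it should go through without surprises. A secondary point worth flagging explicitly is the convention for the set-valued subscripts $Q_b$ and $F_b$, since the definitions in the preliminaries are phrased for a single element of $M$; once these are fixed the case analysis above is routine.
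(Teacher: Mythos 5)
Your proof is correct, but it is not the paper's argument: the two proofs regroup the nested action in opposite directions. The paper takes $x\in Q_{g(r_1^{n-1},a)}$, splits off the case $0\in g(x,1^{(n-2)},g(r_1^{n-1},a))$ (placing $x$ in $F_{g(r_1^{n-1},a)}$), and otherwise rewrites $g(x,1^{(n-2)},g(r_1^{n-1},a))=g(r_1^{n-1},g(x,1^{(n-2)},a))$, i.e.\ it pushes $x$ into the \emph{module} slot and applies the weakly classical prime property with the original scalars $r_1^{n-1}$ acting on the set $g(x,1^{(n-2)},a)$; the conclusion $g(r_i,1^{(n-2)},g(x,1^{(n-2)},a))\subseteq Q$ then commutes straight back to $x\in Q_{g(r_i,1^{(n-2)},a)}$. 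You instead push $s$ into the \emph{scalar} slot, forming $t=g^{\prime}(s,r_1,1^{(n-2)})$ (a genuine element of $R$, since $g^{\prime}$ is an operation in a Krasner $(m,n)$-hyperring) and applying the definition to the single element $a$ with scalars $t,r_2,\dots,r_{n-1}$, after which you need two translation steps: absorption $g(s,1^{(n-2)},Q)\subseteq Q$ when the surviving index is $i\ge 2$, and a second associativity regrouping when it is $t$. Your route is slightly longer but strictly cleaner: the paper's application of the weakly classical prime hypothesis to the \emph{set} $g(x,1^{(n-2)},a)$ is not literally licensed by the definition (which is stated for a single element of $M$), and an element-wise application would yield an index $i$ possibly depending on the chosen point of that set; your version never leaves the scope of the definition. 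You also dispose of the corner case $s=0$ explicitly, which matters because $F_{g(r_1^{n-1},a)}$ excludes $0$ by definition and the paper's proof silently skips it.
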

\begin{proof}
Suppose that $a \in M$ and $r_1^{n-1} \in R$. 
Assume that $x \in Q_{g(r_1^{n-1},a)}$. This means that $g(x,1^{(n-2)},g(r_1^{n-1},a)) \subseteq Q$. If $0 \in g(x,1^{(n-2)},g(r_1^{n-1},a))$, then $x \in F_{g(r_1^{n-1},a)}$. If $0 \notin g(x,1^{(n-2)},g(r_1^{n-1},a))=g(r_1^{n-1},g(x,1^{(n-2)},a))$, then we conclude that $g(r_i,1^{(n-2)},g(x,1^{(n-2)},a)=g(x,1^{(n-2)},g(r_i,1^{(n-2)},a)) \subseteq Q$ for some $1 \leq i \leq n-1$ since $Q$ is an $n$-ary weakly classical prime subhypermodule of  $M$. This implies that $x \in Q_{g(r_i,1^{(n-2)},a)}$ which means $Q_{g(r_1^{n-1},a)} \subseteq F_{g(r_1^{n-1},a)} \cup Q_{g(r_1,1^{(n-2)},a)} \cup \cdots \cup Q_{g(r_{n-1},1^{(n-2)},a)}$ and the proof is completed.
\end{proof}
Recall from \cite{sorc2} that if $(M_1,f_1,g_1)$ and $(M_2,f_2,g_2)$ are two $(m,n)$-hypermodules over $R$, then the $(m,n)$-hypermodule $(M_1 \times M_2, f_1 \times f_2, g_1 \times g_2)$ over $R$ is defined by $m$-ary hyperoperation $f_1 \times f_2$ and $n$-ary external hyperoperation $g_1 \times g_2$, as follows:

$ \hspace{0.5cm} f_1 \times f_2 ((a_1,b_1), \cdots,(a_m,b_m))=\{(x_1,x_2) \ \vert \ x_1 \in f_1(a_1^m), x_2 \in f_2(b_1^m)\}$

$\hspace{0.5cm}g_1 \times g_2(r_1^{n-1},(a,b))=\{(y_1,y_2) \ \vert \ y_1 \in g_1(r_1^{n-1},a), y_2 \in g_2(r_1^{n-1},b)\}$
\vspace{0.5mm}
\begin{theorem}
Let $(M_1,f_1,g_1)$ and $(M_2,f_2,g_2)$ be two $(m,n)$-hypermodules over $R$ and $Q_1$ be a proper subhypermodule of $M_1$. Then $Q_1 \times M_2$ is an $n$-ary weakly classical prime subhypermodule of $M_1 \times M_2$ if and only if $Q_1$ is an $n$-ary weakly classical prime subhypermodule of $M_1$ and  $0 \in g_1(r_1^{n-1},a_1)$ for  $r_1^{n-1} \in R$, $a_1 \in M_1$ such that $g(r_i,1^{(n-2)},a_1) \nsubseteq Q_1$  for all $1 \leq i \leq n-1$ imply that $ g^{\prime}(r_1^{n-1},1) \in F_{a_2} $ for all $a_2 \in M_2$.
\end{theorem}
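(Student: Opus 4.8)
The plan is to reduce the whole equivalence to the componentwise description of the external hyperoperation on the product. First I would record the elementary translations that come directly from the definition of $g_1\times g_2$: for $r_1^{n-1}\in R$ and $(a_1,a_2)\in M_1\times M_2$ one has $(g_1\times g_2)(r_1^{n-1},(a_1,a_2))=g_1(r_1^{n-1},a_1)\times g_2(r_1^{n-1},a_2)$; this set lies in $Q_1\times M_2$ exactly when $g_1(r_1^{n-1},a_1)\subseteq Q_1$ (the second coordinate being automatic); its zero $(0,0)$ belongs to it exactly when $0\in g_1(r_1^{n-1},a_1)$ \emph{and} $0\in g_2(r_1^{n-1},a_2)$; and $(g_1\times g_2)(r_i,1^{(n-2)},(a_1,a_2))\subseteq Q_1\times M_2$ exactly when $g_1(r_i,1^{(n-2)},a_1)\subseteq Q_1$. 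These observations turn the defining property of a weakly classical prime subhypermodule of $M_1\times M_2$ into a condition involving only the first coordinate together with a control on when $0\in g_2(r_1^{n-1},a_2)$.

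For the forward implication I would test the definition on chosen elements. To see that $Q_1$ is weakly classical prime, suppose $0\notin g_1(r_1^{n-1},a_1)\subseteq Q_1$ and feed in $(a_1,0)$: since $g_2(r_1^{n-1},0)=\{0\}$, the product set lies in $Q_1\times M_2$ and avoids $(0,0)$, so the hypothesis on $Q_1\times M_2$ returns an index $i$ with $g_1(r_i,1^{(n-2)},a_1)\subseteq Q_1$. For the second condition, take $r_1^{n-1}$ and $a_1$ with $0\in g_1(r_1^{n-1},a_1)\subseteq Q_1$ while $g_1(r_i,1^{(n-2)},a_1)\nsubseteq Q_1$ for every $i$, and fix an arbitrary $a_2$. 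If $0\notin g_2(r_1^{n-1},a_2)$, then $(0,0)\notin(g_1\times g_2)(r_1^{n-1},(a_1,a_2))\subseteq Q_1\times M_2$, so the definition would force some $g_1(r_i,1^{(n-2)},a_1)\subseteq Q_1$, a contradiction; hence $0\in g_2(r_1^{n-1},a_2)$. To conclude I rewrite $g_2(r_1^{n-1},a_2)=g_2(g^{\prime}(r_1^{n-1},1),1^{(n-2)},a_2)$ via the mixed-associativity axiom $(iii)$ and the scalar identity (the same manipulation used in the proof of Theorem \ref{21}), so that $0\in g_2(r_1^{n-1},a_2)$ reads precisely as $g^{\prime}(r_1^{n-1},1)\in F_{a_2}$.

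For the converse I would run the translation in reverse. Assume $(0,0)\notin(g_1\times g_2)(r_1^{n-1},(a_1,a_2))\subseteq Q_1\times M_2$; the inclusion gives $g_1(r_1^{n-1},a_1)\subseteq Q_1$, and the exclusion of $(0,0)$ gives $0\notin g_1(r_1^{n-1},a_1)$ or $0\notin g_2(r_1^{n-1},a_2)$. Arguing by contradiction, suppose $g_1(r_i,1^{(n-2)},a_1)\nsubseteq Q_1$ for all $i$. If $0\notin g_1(r_1^{n-1},a_1)$, weak classical primeness of $Q_1$ yields a contradiction directly; otherwise $0\in g_1(r_1^{n-1},a_1)$, and the second hypothesis gives $g^{\prime}(r_1^{n-1},1)\in F_{a_2}$, i.e. $0\in g_2(r_1^{n-1},a_2)$, forcing $(0,0)\in(g_1\times g_2)(r_1^{n-1},(a_1,a_2))$ — again a contradiction. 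Hence some $g_1(r_i,1^{(n-2)},a_1)\subseteq Q_1$, and $Q_1\times M_2$ is weakly classical prime.

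The main obstacle is the bookkeeping around the zero of the ring. The equivalence $0\in g_2(r_1^{n-1},a_2)\Leftrightarrow g^{\prime}(r_1^{n-1},1)\in F_{a_2}$ is exact only when $g^{\prime}(r_1^{n-1},1)\neq 0$, since $F_{a_2}$ excludes $0$ by definition; the degenerate case $g^{\prime}(r_1^{n-1},1)=0$ forces $g_1(r_1^{n-1},a_1)=\{0\}$ and $g_2(r_1^{n-1},a_2)=\{0\}$ for every $a_2$, which I would dispose of separately since then $(0,0)$ is never excluded and the defining condition is vacuous. Proving the associativity identity $g_2(r_1^{n-1},a_2)=g_2(g^{\prime}(r_1^{n-1},1),1^{(n-2)},a_2)$ cleanly, together with keeping the inclusion $g_1(r_1^{n-1},a_1)\subseteq Q_1$ in force throughout the forward argument, is where the real care lies; the remainder is routine translation through the product structure.
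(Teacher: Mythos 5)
Your proposal follows the same route as the paper's own proof: the componentwise translation of $g_1\times g_2$, testing the product condition at $(a_1,0)$ to show $Q_1$ is weakly classical prime, a contradiction argument at $(a_1,a_2)$ for the second condition, and the two-case split ($0\notin g_1(r_1^{n-1},a_1)$ versus $0\in g_1(r_1^{n-1},a_1)$) in the converse. In two places you are in fact more careful than the paper. First, you insert the hypothesis $g_1(r_1^{n-1},a_1)\subseteq Q_1$ into the second condition; the paper's forward argument assumes only $0\in g_1(r_1^{n-1},a_1)$ and then writes $(0,0)\notin g_1\times g_2(r_1^{n-1},(a_1,a_2))\subseteq Q_1\times M_2$, an inclusion it has no right to -- your version is the one the forward direction can actually deliver, and it still suffices for the converse. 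Second, you flag that $0\in g_2(r_1^{n-1},a_2)$ and $g^{\prime}(r_1^{n-1},1)\in F_{a_2}$ are equivalent only when $g^{\prime}(r_1^{n-1},1)\neq 0$; the paper silently asserts that $g^{\prime}(r_1^{n-1},1)\notin F_{a_2}$ implies $0\notin g_2(g^{\prime}(r_1^{n-1},1),1^{(n-2)},a_2)$, which is false when $g^{\prime}(r_1^{n-1},1)=0$.

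However, your proposed disposal of that degenerate case does not close the forward direction: vacuousness cuts the wrong way. If $g^{\prime}(r_1^{n-1},1)=0$ while $g_1(r_i,1^{(n-2)},a_1)\nsubseteq Q_1$ for every $i$ -- possible when $R$ has zero divisors, e.g.\ $R=M_1=M_2=\mathbb{Z}_6$ as $(3,3)$-structures with $g^{\prime}(2,3,1)=0$, $Q_1=\{0\}$, $a_1=1$ -- then $Q_1\times M_2$ is weakly classical prime (the defining condition is indeed vacuous at such inputs, and one checks it elsewhere), yet the hypothesis of the second condition holds, including your added $\subseteq Q_1$, and its conclusion $g^{\prime}(r_1^{n-1},1)\in F_{a_2}$ is automatically false because $F_{a_2}$ excludes $0$ by definition. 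So at this point no argument can succeed: the literal statement fails, and ``disposing of the case separately'' cannot rescue it. What your argument genuinely proves is the theorem with the conclusion of the second condition replaced by ``$0\in g_2(r_1^{n-1},a_2)$ for all $a_2\in M_2$''; with that emendation both of your directions are sound. To be clear, this defect originates in the statement itself, and the paper's own proof breaks at exactly the same step -- it just never acknowledges the issue, whereas you identified it but misjudged which direction it threatens.
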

\begin{proof}
$(\Longrightarrow) $ Let $Q_1 \times M_2$ be an $n$-ary weakly classical prime subhypermodule of $M_1 \times M_2$. Suppose that $0 \notin g_1(r_1^{n-1},a_1) \subseteq Q_1$ for some $r_1^{n-1} \in R$ and for some $a_1 \in M_1$. Then we have $(0,0) \notin g_1 \times g_2(r_1^{n-1},(a_1,0)) \subseteq Q_1 \times M_2$. Therefore $g_1 \times g_2 (r_i,1^{(n-1)},(a_1,0))=\{(y_1,y_2) \ \vert \ y_1 \in g_1(r_i,1^{(n-2)},a_1), y_2 \in g_2(r_i,1^{(n-2)},0)\} \subseteq Q_1 \times M_2$ for some $1 \leq i \leq n-1$ which means $g_1(r_i,1^{(n-2)},a_1) \subseteq Q_1$. Thus $Q_1$ is an $n$-ary weakly classical prime subhypermodule of $M_1$. Suppose that $0 \in g_1(r_1^{n-1},a_1)$ for  $r_1^{n-1} \in R$, $a_1 \in M_1$ with $g(r_i,1^{(n-2)},a_1) \nsubseteq Q_1$  for all $1 \leq i \leq n-1$. Assume on the contrary that $ g^{\prime}(r_1^{n-1},1) \notin F_{a_2} $ for some $a_2 \in M_2$. This implies that $0 \notin g_2(g^{\prime}(r_1^{n-1},1),1^{(n-2)},a_2)$. It follows that $(0,0) \notin g_1 \times g_2(r_1^{n-1},(a_1,a_2)) \subseteq Q_1 \times M_2$. Since $Q_1 \times M_2$ is an $n$-ary weakly classical prime subhypermodule of $M_1 \times M_2$, we obtain $g_1 \times g_2(r_i,1^{(n-2)},(a_1,a_2))=\{(y_1,y_2) \vert \ y_1 \in g_1(r_i,1^{(n-2)},a_1), y_2 \in g_2(r_i,1^{(n-2)},a_2) \} \subseteq Q_1 \times M_2$ which implies $g_1(r_i,1^{(n-2)},a_1) \subseteq Q_1$, a contradiction. Hence $ g^{\prime}(r_1^{n-1},1) \in F_{a_2} $ for all $a_2 \in M_2$.

$(\Longleftarrow)$ Let $(0,0) \notin g_1 \times g_2(r_1^{n-1},(a_1,a_2))=\{(y_1,y_2) \vert \ y_1 \in g_1(r_1^{n-1},a_1), y_2 \in g_2(r_1^{n-1},a_2)\} \subseteq Q_1 \times M_2$ for some $r_1^{n-1} \in R$ and $(a_1,a_2) \in Q_1 \times M_2$. If $0 \notin g_1(r_1^{n-1},a_1)$, then we get $g_1(r_i,1^{(n-2)},a_1) \subseteq Q_1$ for some $1 \leq i \leq n-1$ which implies $g_1 \times g_2(r_i,1^{(n-2)},(a_1,a_2)) \subseteq Q_1 \times M_2$ for some $1 \leq i \leq n-1$, as needed. If $0 \in g_1(r_1^{n-1},a_1)$, we get $0 \notin g_2(r_1^{n-1},a_2)$ which means $g^{\prime}(r_1^{n-1},1) \notin F_{a_2}$. Then we conclude that $g_1(r_i,1^{(n-2)},a_1) \subseteq Q_1$ for some $1 \leq i \leq n-1$ which implies $g_1 \times g_2(r_i,1^{(n-2)},(a_1,a_2)) \subseteq Q_1 \times M_2$. Thus $Q_1 \times M_2$ is an $n$-ary weakly classical prime subhypermodule of $M_1 \times M_2$.
\end{proof}
Let $(M_1,f_1,g_1)$ and $(M_2,f_2,g_2)$ are two $(m,n)$-hypermodules over $(R_1,f^{\prime}_1,g^{\prime}_1)$ and $(R_2,f^{\prime}_2,g^{\prime}_2)$, respectively. Then the $(m,n)$-hypermodule $(M_1 \times M_2, f_1 \times f_2, g_1 \times g_2)$ over $(R_1 \times R_2,f^{\prime}_1 \times f^{\prime}_2, g^{\prime}_1 \times g^{\prime}_2)$ is defined by $m$-ary hyperoperation $f_1 \times f_2$ and $n$-ary external hyperoperation $g_1 \times g_2$, as follows:

$ f_1 \times f_2 ((a_1,b_1), \cdots,(a_m,b_m))=\{(x_1,x_2) \ \vert \ x_1 \in f_1(a_1^m), x_2 \in f_2(b_1^m)\}$\\
$ g_1 \times g_2((r_1,s_1),\cdots,(r_{n-1},s_{n-1}),(a,b))=\{(y_1,y_2) \ \vert \ y_1 \in g_1(r_1^{n-1},a), y_2 \in g_2(s_1^{n-1},b)\}$

for all $a_1^m,a \in M_1$, $b_1^m,b \in M_2$, $r_1^{n-1} \in R_1$ and $s_1^{n-1} \in R_2$.
\begin{theorem}
Let $(M_1 \times M_2,f_1 \times f_2, g_1 \times g_2)$ be an $(m,n)$-hypermodule over $(R_1 \times R_2,f^{\prime}_1 \times f^{\prime}_2,g^{\prime}_1 \times g^{\prime}_2)$ such that $(M_1,f_1,g_1)$ is an $(m,n)$-hypermodule over $(R_1, f^{\prime}_1,g^{\prime}_1)$ and $(M_2,f_2,g_2)$ is an $(m,n)$-hypermodule over $(R_2, f^{\prime}_2, g^{\prime}_2)$. Let $Q_1 \times M_2$ be a proper subhypermodule of $M_1 \times M_2$. Then the followings are equivalent:
\begin{itemize} 
\item[\rm(1)]~ $Q_1$ is an $n$-ary classical prime subhypermodule of $M_1$.
\item[\rm(2)]~ $Q_1 \times M_2$ is an $n$-ary classical prime subhypermodule of $M_1 \times M_2$.
\item[\rm(3)]~ $Q_1 \times M_2$ is an $n$-ary weakly classical prime subhypermodule of $M_1 \times M_2$.
\end{itemize}
\end{theorem}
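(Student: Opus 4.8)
The plan is to establish the cycle $(1)\Rightarrow(2)\Rightarrow(3)\Rightarrow(1)$, exploiting the fact that $g_1\times g_2$ acts coordinatewise, so that membership of a product set in $Q_1\times M_2$ is governed entirely by its first coordinate.

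For $(1)\Rightarrow(2)$, I would take scalars $(r_1,s_1),\dots,(r_{n-1},s_{n-1})\in R_1\times R_2$ and $(a,b)\in M_1\times M_2$ with $g_1\times g_2((r_1,s_1),\dots,(r_{n-1},s_{n-1}),(a,b))\subseteq Q_1\times M_2$. By the definition of $g_1\times g_2$ this is equivalent to $g_1(r_1^{n-1},a)\subseteq Q_1$, the second-coordinate requirement $g_2(s_1^{n-1},b)\subseteq M_2$ being automatic. Since $Q_1$ is $n$-ary classical prime, $g_1(r_i,1^{(n-2)},a)\subseteq Q_1$ for some $1\leq i\leq n-1$; then $g_1\times g_2((r_i,s_i),(1,1)^{(n-2)},(a,b))$ has first coordinate in $Q_1$ and second coordinate in $M_2$, hence lies in $Q_1\times M_2$. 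This gives $(2)$. The implication $(2)\Rightarrow(3)$ is immediate from the definitions, because the weakly classical prime hypothesis $0\notin g(r_1^{n-1},a)\subseteq Q$ already contains $g(r_1^{n-1},a)\subseteq Q$, to which the classical prime property applies verbatim.

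The crux is $(3)\Rightarrow(1)$, and the hard part will be that the weak hypothesis only fires when $(0,0)$ is excluded, so I must embed the given datum into $M_1\times M_2$ in a way that forces $(0,0)$ out of the relevant product. First note that properness of $Q_1\times M_2$ forces $Q_1\neq M_1$, so $Q_1$ is proper. Now assume $g_1(r_1^{n-1},a)\subseteq Q_1$ for $r_1^{n-1}\in R_1$ and $a\in M_1$. I would fix a nonzero element $b\in M_2$ and put the identity $1_{R_2}$ in the second coordinate, considering the scalars $(r_1,1),\dots,(r_{n-1},1)$ and the element $(a,b)$. Then $g_1\times g_2((r_1,1),\dots,(r_{n-1},1),(a,b))=g_1(r_1^{n-1},a)\times\{b\}\subseteq Q_1\times M_2$, and since $g_2(1^{(n-1)},b)=\{b\}$ with $b\neq 0$, this product avoids $(0,0)$. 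Applying the weakly classical prime property of $Q_1\times M_2$ yields, for some $i$, $g_1\times g_2((r_i,1),(1,1)^{(n-2)},(a,b))=g_1(r_i,1^{(n-2)},a)\times\{b\}\subseteq Q_1\times M_2$, whence $g_1(r_i,1^{(n-2)},a)\subseteq Q_1$, so $Q_1$ is classical prime.

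The only genuinely nontrivial step is this last construction: choosing the second-coordinate scalar to be $1_{R_2}$ and $b$ nonzero is precisely what keeps the product away from $(0,0)$ and thereby upgrades the \emph{weak} conclusion to the full classical prime conclusion. I expect this to be the main obstacle, together with the implicit use of $M_2\neq\{0\}$ (so that a nonzero $b$ exists); without a nonzero $b$ the argument would only recover that $Q_1$ is weakly classical prime, so I would flag that the statement tacitly assumes $M_2$ is nontrivial.
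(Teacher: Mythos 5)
Your proof is correct and follows essentially the same route as the paper: the cycle $(1)\Rightarrow(2)\Rightarrow(3)\Rightarrow(1)$ with coordinatewise reasoning, and for $(3)\Rightarrow(1)$ the choice of a nonzero $b\in M_2$ to keep $(0,0)$ out of the relevant product. If anything, your version is slightly tighter: the paper leaves the second-coordinate scalars $s_1^{n-1}$ unspecified in $(3)\Rightarrow(1)$, where the exclusion of $(0,0)$ actually requires a choice such as your $s_i=1_{R_2}$, and your observation that $M_2\neq\{0\}$ is tacitly assumed is likewise accurate.
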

\begin{proof}
$(1) \Longrightarrow (2)$ Let $g_1 \times g_2((r_1,s_1), \cdots (r_{n-1},s_{n-1}),(a,b)) =\{(y_1,y_2) \ \vert \ y_1 \in g_1(r_1^{n-1},a), y_2 \in g_2(s_1^{n-1},b)\}\subseteq Q_1 \times M_2$ for some $(r_1,s_1), \cdots , (r_{n-1},s_{n-1}) \in R_1 \times R_2$, $(a,b) \in M_1 \times M_2$. Therefore $g_1(r_1^{n-1},a) \subseteq Q_1$. Since $Q_1$ is an $n$-ary classical prime subhypermodule of $M_1$, we conclude that $g_1(r_i,1^{(n-2)},a) \subseteq Q_1$ for some $1 \leq i \leq n-1$ which implies $g_1 \times g_2((r_i,s_i),(1,1)^{n-2},(a,b)) \subseteq Q_1 \times M_2$. This shows that $Q_1 \times M_2$ is an $n$-ary classical prime subhypermodule of $M_1 \times M_2$.\\
$(2) \Longrightarrow (3)$ It is obvious.\\
$(3) \Longrightarrow (1)$ Assume that $g_1(r_1^{n-1},a) \subseteq Q_1$ for some $r_1^{n-1} \in R_1$ and $a \in M_1$. Let us pick $0 \neq b \in M_2$. Then $(0,0) \notin g_1 \times g_2((r_1,s_1),\cdots,(r_{n-1},s_{n-1}),(a,b)) \subseteq Q_1 \times M_2$. Since $Q_1 \times M_2$ is an $n$-ary weakly classical prime subhypermodule of $M_1 \times M_2$, we get $g_1 \times g_2((r_i,s_i),(1,1)^{(n-2)},(a,b)) \subseteq Q_1 \times M_2$ for some $1 \leq i \leq n-1$ which shows $g_1(r_i,1^{(n-2)},a) \subseteq Q_1$. Consequently, $Q_1$ is an $n$-ary classical prime subhypermodule of $M_1$.
\end{proof}
\section{$n$-ary $\phi$-classical prime subhypermodule}
In this section, the concept of $n$-ary $\phi$-classical prime subhypermodules of an $(m,n)$-hypermodule over $R$ is introduced. The results obtained in the theorems seem to play an important role to study $n$-ary $\phi$-classical prime subhypermodules.
\begin{definition}
Let  $\mathcal{SH}(M)$ be the set of all subhypermodules of an $(m,n)$-hypermodule $M$ over $R$  and $ \phi : \mathcal{SH}(M) \longrightarrow \mathcal{SH}(M) \cup \{\varnothing\}$ be a function. A proper subhypermodule $Q$ of $M$ is said to be an $n$-ary $\phi$-classical prime subhypermodule if $r_1^{n-1} \in R$ and $a \in M$, $g(r_1^{n-1},a) \subseteq Q-\phi(Q)$ implies that $g(r_i,1^{(n-2)},a) \subseteq Q$ for some $1 \leq i \leq n-1$.
\end{definition}
\begin{example}
Assume that $\mathbb{Z}$ is the ring of integers and  $(\mathbb{Z}, f , g)$ is the $(m, n)$-hypermodule  over $(\mathbb{Z}, h, k)$ defined in Example 3.5 of \cite{sorc2}. Let for ever subhypermodule $N$ of $\mathbb{Z}$, $S_N=\{r \in \mathbb{Z} \ \vert \ g(r,1^{(n-2)},\mathbb{Z}) \subseteq N\}$ . Consider the function $\phi :\mathcal{SH}(\mathbb{Z}) \longrightarrow \mathcal{SH}(\mathbb{Z}) \cup \{\varnothing\}$ defined by $\phi(N)=g(S_N,1^{(n-2)},N)$ for ever subhypermodule $N$ of $\mathbb{Z}$. Then the subhypermodule $g(\mathbb{Z}^{n-1},p)$ of $\mathbb{Z}$ is an $n$-ary $\phi$-classical prime subhypermodule.
\end{example}
Suppose that  $N$ is a subhypermodule of an $(m,n)$-hypermodule $M$ over $R$ and  $ \phi : \mathcal{SH}(M) \longrightarrow \mathcal{SH}(M) \cup \{\varnothing\}$ is a function. Define $\phi_N$ from $\mathcal{SH}(M/N)$ into $\mathcal{SH}(M/N) \cup \{\varnothing\}$ by $\phi_N(K/N)=f(\phi(K),N,0^{(m-2)})/N$ for all $K \in \mathcal{SH}(M)$ such that $N \subseteq K$. If $\phi_N(K)=\varnothing$, then we consider $\phi_N(K/N)=\varnothing$. 
\begin{theorem}
Let  $N \subseteq Q$ be  proper subhypermodules of an $(m,n)$-hypermodule $M$ over $R$ and  $ \phi : \mathcal{SH}(M) \longrightarrow \mathcal{SH}(M) \cup \{\varnothing\}$ be a function. If $Q$ is an $n$-ary $\phi$-classical prime subhypermodule of $M$, then $Q/N$ is a $\phi_N$-classical prime subhypermodule of $M/N$.
\end{theorem}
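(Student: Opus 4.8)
The plan is to fix an arbitrary element $\bar{a}=f(a,N,0^{(m-2)})$ of $M/N$ together with scalars $r_1^{n-1}\in R$ satisfying the defining hypothesis $G(r_1^{n-1},\bar{a})\subseteq Q/N-\phi_N(Q/N)$, and then to transfer everything down to $M$, where the $\phi$-classical primeness of $Q$ can be invoked. Throughout I write $\bar{x}$ for the coset $f(x,N,0^{(m-2)})$. First I would record the computational identity $G(r_1^{n-1},\bar{a})=\{\bar{z}\mid z\in g(r_1^{n-1},a)\}$, which follows directly from the formula defining $G$ on $M/N$ together with $g(r_1^{n-1},0)=\{0\}\subseteq N$; the same formula with $r_i,1^{(n-2)}$ in place of $r_1^{n-1}$ gives the analogous identity for $G(r_i,1^{(n-2)},\bar{a})$, which I will need at the end. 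I also want the coset dictionary: since $N\subseteq Q$, one has $\bar{z}\in Q/N$ if and only if $z\in Q$, and consequently $G(r_1^{n-1},\bar{a})\subseteq Q/N$ is equivalent to $g(r_1^{n-1},a)\subseteq Q$.

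Next I would unwind the hypothesis. The containment $G(r_1^{n-1},\bar{a})\subseteq Q/N-\phi_N(Q/N)$ splits into $G(r_1^{n-1},\bar{a})\subseteq Q/N$, which yields $g(r_1^{n-1},a)\subseteq Q$ by the dictionary, and the disjointness $G(r_1^{n-1},\bar{a})\cap\phi_N(Q/N)=\varnothing$. For the second part I would argue that $g(r_1^{n-1},a)\cap\phi(Q)=\varnothing$: if some $z$ lay in both, then since $0\in N$ is the neutral element of $f$ we have $z=f(z,0^{(m-1)})\in f(z,N,0^{(m-2)})\subseteq f(\phi(Q),N,0^{(m-2)})$, so that $\bar{z}\in f(\phi(Q),N,0^{(m-2)})/N=\phi_N(Q/N)$, while also $\bar{z}\in G(r_1^{n-1},\bar{a})$, contradicting disjointness. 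When $\phi(Q)=\varnothing$ this step is vacuous and $\phi_N(Q/N)=\varnothing$ by the stated convention. Combining the two conclusions gives $g(r_1^{n-1},a)\subseteq Q-\phi(Q)$.

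Finally I would apply the hypothesis that $Q$ is $n$-ary $\phi$-classical prime to obtain $g(r_i,1^{(n-2)},a)\subseteq Q$ for some $1\leq i\leq n-1$, and then translate back via the computational identity for $G(r_i,1^{(n-2)},\bar{a})$ and the coset dictionary to conclude $G(r_i,1^{(n-2)},\bar{a})\subseteq Q/N$, which is exactly what the definition of a $\phi_N$-classical prime subhypermodule of $M/N$ requires. The only genuinely delicate point, and the main obstacle, is the bookkeeping around $\phi_N$: one must verify that avoiding $\phi_N(Q/N)$ upstairs really does force $g(r_1^{n-1},a)$ to avoid $\phi(Q)$ downstairs, and this rests precisely on the inclusion $\phi(Q)\subseteq f(\phi(Q),N,0^{(m-2)})$ together with a separate treatment of the convention $\phi(Q)=\varnothing$. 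Everything else is a routine transfer of the hyperoperations through the quotient.
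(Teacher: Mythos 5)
Your proposal is correct and follows the same route as the paper's proof: pass the hypothesis $G(r_1^{n-1},\bar{a})\subseteq Q/N-\phi_N(Q/N)$ down to $g(r_1^{n-1},a)\subseteq Q-\phi(Q)$, apply the $\phi$-classical primeness of $Q$, and lift the conclusion back to $M/N$. The only difference is one of detail: the paper asserts the descent step in a single ``Therefore,'' whereas you justify it explicitly via the identity $G(r_1^{n-1},\bar{a})=\{\bar{z}\mid z\in g(r_1^{n-1},a)\}$, the coset dictionary using $N\subseteq Q$, and the disjointness argument resting on $\phi(Q)\subseteq f(\phi(Q),N,0^{(m-2)})$ — precisely the bookkeeping the paper leaves to the reader.
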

\begin{proof}
Let $G(r_1^{n-1},f(a,N,0^{(n-m)})) \subseteq Q/N-\phi_N(Q/N).$ Therefore $g(r_1^{n-1},a) \subseteq Q-\phi(Q)$ which implies $g(r_i, 1^{(n-2)},a) \subseteq Q$ for some $1 \leq i \leq n-1$ since $Q$ is an $n$-ary $\phi$-classical prime subhypermodule of $M$. Thus $G(r_i,1^{(n-2)},f(a,N,0^{(m-2)})) \subseteq Q/N$. This shows that $Q/N$ is a $\phi_N$-classical prime subhypermodule of $M/N$.
\end{proof}
\begin{theorem} \label{90}
Let $N$ and $Q$ be  proper subhypermodules of an $(m,n)$-hypermodule $M$ over $R$ such that $N \subseteq Q$. Suppose that $ \phi : \mathcal{SH}(M) \longrightarrow \mathcal{SH}(M) \cup \{\varnothing\}$ is a function. Then the followings hold:
\begin{itemize} 
\item[\rm(1)]~ If $Q$ is an $n$-ary $\phi$-classical prime subhypermodule of $M$ such that $\phi(Q) \subseteq N$, then $Q/N$ is an $n$-ary weakly classical prime subhypermodule of $M/N$. 
\item[\rm(2)]~ If $Q/N$ is an $n$-ary $\phi_N$-classical prime subhypermodule of $M/N$ such that $N \subseteq \phi(Q)$, then $Q$ is an $n$-ary $\phi$-classical prime subhypermodule of $M$.
\item[\rm(3)]~If $N$ is an $n$-ary $\phi$-classical prime subhypermodule of $M$ such that $\phi(N) \subseteq \phi(Q)$ and $Q/N$ is an $n$-ary weakly classical prime subhypermodule of $M/N$, then $Q$ is an $n$-ary $\phi$-classical prime subhypermodule of $M$.
\end{itemize}
\end{theorem}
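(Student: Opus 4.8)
The plan is to translate every membership statement about the quotient $M/N$ into a statement about representatives in $M$ via the defining formula for $G$. Writing $\overline{a}=f(a,N,0^{(m-2)})$ for the coset of $a$ and using $g(r_1^{n-1},0)=\{0\}$, that formula collapses to $G(r_1^{n-1},\overline{a})=\{\,\overline{z}\ \vert\ z\in g(r_1^{n-1},a)\,\}$. Two elementary facts will drive all three parts: for any subhypermodule $K$ with $N\subseteq K$, a coset $\overline{z}$ lies in $K/N$ exactly when $z\in K$, so $G(r_1^{n-1},\overline{a})\subseteq K/N$ if and only if $g(r_1^{n-1},a)\subseteq K$; and the zero coset $\overline{0}=N$ belongs to $G(r_1^{n-1},\overline{a})$ if and only if $g(r_1^{n-1},a)\cap N\neq\varnothing$. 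I would also record at the outset that when $N\subseteq\phi(Q)$ one has $\phi_N(Q/N)=f(\phi(Q),N,0^{(m-2)})/N=\phi(Q)/N$, since $N\subseteq\phi(Q)$ makes $f(\phi(Q),N,0^{(m-2)})=\phi(Q)$; this is what makes the $\phi_N$-structure upstairs match the $\phi$-structure downstairs.

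For (1), starting from $\overline{0}\notin G(r_1^{n-1},\overline{a})\subseteq Q/N$, the two facts give $g(r_1^{n-1},a)\subseteq Q$ together with $g(r_1^{n-1},a)\cap N=\varnothing$; since $\phi(Q)\subseteq N$, the latter yields $g(r_1^{n-1},a)\cap\phi(Q)=\varnothing$, hence $g(r_1^{n-1},a)\subseteq Q-\phi(Q)$. Then $\phi$-classical primeness of $Q$ delivers some $i$ with $g(r_i,1^{(n-2)},a)\subseteq Q$, and pushing this back up gives $G(r_i,1^{(n-2)},\overline{a})\subseteq Q/N$. For (2), starting from $g(r_1^{n-1},a)\subseteq Q-\phi(Q)$, I would lift $g(r_1^{n-1},a)\subseteq Q$ to $G(r_1^{n-1},\overline{a})\subseteq Q/N$, and then use $N\subseteq\phi(Q)$ to see that $g(r_1^{n-1},a)\cap\phi(Q)=\varnothing$ is equivalent to $G(r_1^{n-1},\overline{a})\cap\phi_N(Q/N)=\varnothing$ (a coset of an element outside $\phi(Q)$ cannot land in $\phi(Q)/N$ precisely because $N\subseteq\phi(Q)$). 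Thus $G(r_1^{n-1},\overline{a})\subseteq Q/N-\phi_N(Q/N)$, and $\phi_N$-classical primeness of $Q/N$ produces the required $g(r_i,1^{(n-2)},a)\subseteq Q$.

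For (3), I would fix $r_1^{n-1}\in R$ and $a\in M$ with $g(r_1^{n-1},a)\subseteq Q-\phi(Q)$ and split on how $g(r_1^{n-1},a)$ meets $N$. If $g(r_1^{n-1},a)\subseteq N$, then $\phi(N)\subseteq\phi(Q)$ forces $g(r_1^{n-1},a)\cap\phi(N)=\varnothing$, so $g(r_1^{n-1},a)\subseteq N-\phi(N)$ and the $\phi$-classical primeness of $N$ gives $g(r_i,1^{(n-2)},a)\subseteq N\subseteq Q$ for some $i$. If instead $g(r_1^{n-1},a)\cap N=\varnothing$, then $\overline{0}\notin G(r_1^{n-1},\overline{a})\subseteq Q/N$, so the weakly classical primeness of $Q/N$ gives $G(r_i,1^{(n-2)},\overline{a})\subseteq Q/N$ and hence $g(r_i,1^{(n-2)},a)\subseteq Q$.

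The main obstacle will be the intermediate situation in (3) where $g(r_1^{n-1},a)$ genuinely straddles $N$, meeting it without being contained in it: there $\overline{0}\in G(r_1^{n-1},\overline{a})$ blocks a direct appeal to the weak primeness of $Q/N$, while $g(r_1^{n-1},a)\nsubseteq N$ blocks the appeal to the primeness of $N$. Disposing of this case is the crux, and I would expect to need an extra structural input under the standing hypotheses — either an argument that $g(r_1^{n-1},a)$ cannot straddle $N$ here, or a refinement of the quotient argument that isolates the portion of $g(r_1^{n-1},a)$ lying outside $N$ — so that the two clean cases above actually exhaust all possibilities.
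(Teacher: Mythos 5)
Your arguments for (1) and (2) are correct and are essentially the paper's own proofs: in (1) the paper likewise reads $0 \notin G(r_1^{n-1},f(a,N,0^{(m-2)})) \subseteq Q/N$ together with $\phi(Q)\subseteq N$ as $g(r_1^{n-1},a)\subseteq Q-\phi(Q)$ and then applies $\phi$-classical primeness of $Q$; in (2) it passes from $g(r_1^{n-1},a)\subseteq Q-\phi(Q)$ to $G(r_1^{n-1},f(a,N,0^{(m-2)}))\subseteq Q/N-\phi_N(Q/N)=Q/N-(\phi(Q)/N)$, using exactly the identification $\phi_N(Q/N)=\phi(Q)/N$ that you justify from $N\subseteq\phi(Q)$. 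Your write-up is in fact more explicit than the paper about the coset facts that make these translations legitimate.

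For (3), the straddling case you isolate is real, but you should not expect to find the missing structural input in the paper: the paper's proof does not contain one. The paper splits only into $g(r_1^{n-1},a)\subseteq N$ and $g(r_1^{n-1},a)\nsubseteq N$, and in the second case asserts that $g(r_1^{n-1},a)\nsubseteq N$ implies $0\notin G(r_1^{n-1},f(a,N,0^{(m-2)}))\subseteq Q/N$. Since the zero of $M/N$ is the coset $N$ itself, one has $0_{M/N}\in G(r_1^{n-1},f(a,N,0^{(m-2)}))$ if and only if $g(r_1^{n-1},a)\cap N\neq\varnothing$; so the paper's inference amounts precisely to the claim that $g(r_1^{n-1},a)$ can meet $N$ only by being contained in $N$, i.e., that straddling never occurs. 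For a set-valued external product $g:R^{n-1}\times M\longrightarrow P^*(M)$ nothing in the axioms guarantees this, so the paper's proof of (3) is complete only under an implicit no-straddling hypothesis (which does hold, for instance, whenever $g$ is single-valued, as in all of the paper's examples). In short: your two clean cases coincide with the paper's two cases exactly when straddling is impossible, and the case you correctly flag as the crux is the case the paper silently assumes away; what you have found is a gap in the paper's own argument, not a deficiency peculiar to your approach.
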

\begin{proof}
$(1)$ Let $0 \notin G(r_1^{n-1},f(a,N,0^{(m-2)}) \subseteq Q/N$ for some $r_1^{n-1} \in R$ and $a \in M$. Since $\phi(Q) \subseteq N$,  we conclude that $g(r_1^{n-1},a) \subseteq Q-\phi(Q)$. Since  $Q$ is an $n$-ary $\phi$-classical prime subhypermodule of $M$, we get $g(r_i,1^{(n-2)},a) \subseteq Q$ for some $1 \leq i \leq n-1$. It gives $G(r_i,1^{(n-2)},f(a,N,0^{(m-2)})) \subseteq Q/N$. Thus $Q/N$ is an $n$-ary weakly classical prime subhypermodule of $M/N$. 

$(2)$ Let $g(r_1^{n-1},a) \subseteq Q-\phi(Q)$ for some $r_1^{n-1} \in R$ and $a \in M$. Then we conclude that $G(r_1^{n-1},f(a,N,0^{(m-2)})) \subseteq Q/N-\phi_N(Q/N)=Q/N-(\phi(Q)/N)$. Since $Q/N$ is an $n$-ary $\phi_N$-classical prime subhypermodule of $M/N$, we obtain $G(r_i,1^{(n-2)},f(a,N,0^{(m-2)})) \subseteq Q/N$ for some $1 \leq i \leq n-1$. It follows that  $g(r_i,1^{(n-2)},a) \subseteq Q$. Consequently, $Q$ is an $n$-ary $\phi$-classical prime subhypermodule of $M$.

$(3)$ Suppose that $g(r_1^{n-1},a) \subseteq Q-\phi(Q)$ for some $r_1^{n-1} \in R$ and $a \in M$. From $\phi(N) \subseteq \phi(Q)$, it follows that $g(r_1^{n-1},a) \nsubseteq \phi(N)$. Let $g(r_1^{n-1},a) \subseteq N$. Since $N$ is an $n$-ary $\phi$-classical prime subhypermodule of $M$, we get $g(r_i,1^{(n-2)},a) \subseteq N \subseteq Q$ for some $1 \leq i \leq n-1$. Now, let $g(r_1^{n-1},a)\nsubseteq N$. It implies that $0 \notin G(r_1^{n-1},f(a,N,0^{(m-2)})) \subseteq Q/N$ and so $G(r_i,1^{(n-2)},f(a,N,0^{(m-2)})) \subseteq Q/N$ for some $1 \leq i \leq n-1$ since $Q/N$ is an $n$-ary weakly classical prime subhypermodule of $M/N$. It shows that $g(r_i,1^{(n-2)},a) \subseteq Q$ for some $1 \leq i \leq n-1$. Hence $Q$ is an $n$-ary $\phi$-classical prime subhypermodule of $M$.
\end{proof}
In view of Theorem \ref{90}, the following result is obtained.
\begin{corollary}
Assume that   $Q$ is  a proper subhypermodule of an $(m,n)$-hypermodule $M$ over $R$ and $ \phi : \mathcal{SH}(M) \longrightarrow \mathcal{SH}(M) \cup \{\varnothing\}$ is a function. Then the following conditions  are equivalent:
\begin{itemize} 
\item[\rm(1)]~ $Q$ is an $n$-ary $\phi$-classical prime subhypermodule of $M$.
\item[\rm(2)]~ $Q/\phi(Q)$ is an $n$-ary weakly classical prime subhypermodule of $M/\phi(Q)$. 
\end{itemize}
\end{corollary}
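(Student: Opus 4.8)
The plan is to derive both implications by specializing Theorem \ref{90} to the choice $N=\phi(Q)$. First I would check that this choice is legitimate: working in the standard setting where $\phi(Q)\in\mathcal{SH}(M)$ with $\phi(Q)\subseteq Q$, the quotients $M/\phi(Q)$ and $Q/\phi(Q)$ are defined, and the three containment hypotheses $N\subseteq Q$, $\phi(Q)\subseteq N$, and $N\subseteq\phi(Q)$ appearing in Theorem \ref{90} all become equalities and therefore hold automatically. So the only substantive task is to identify the induced function $\phi_N$ at $Q/N$.

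The decisive step is the computation of $\phi_N(Q/N)$ when $N=\phi(Q)$. By the definition of $\phi_N$ we have $\phi_N(Q/N)=f(\phi(Q),N,0^{(m-2)})/N$; with $N=\phi(Q)$ and the closure of the subhypermodule $\phi(Q)$ under $f$ with zero entries one gets $f(\phi(Q),\phi(Q),0^{(m-2)})=\phi(Q)$, so $\phi_N(Q/N)$ collapses to the trivial subhypermodule $\{\bar 0\}$ of $M/\phi(Q)$. I would then note that for the subhypermodule $Q/\phi(Q)$ the property of being $n$-ary $\phi_N$-classical prime is \emph{literally} the same as being $n$-ary weakly classical prime: the hypothesis $G(r_1^{n-1},\bar a)\subseteq (Q/\phi(Q))-\phi_N(Q/\phi(Q))=(Q/\phi(Q))-\{\bar 0\}$ says exactly that every element of $G(r_1^{n-1},\bar a)$ lies in $Q/\phi(Q)$ and is nonzero, i.e. $\bar 0\notin G(r_1^{n-1},\bar a)\subseteq Q/\phi(Q)$, which is precisely the hypothesis in the definition of weakly classical prime. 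Put differently, weakly classical prime is the $\phi_N(\cdot)=\{\bar 0\}$ instance of $\phi_N$-classical prime.

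With this identification in hand, the two implications are immediate. For $(1)\Rightarrow(2)$ I would apply Theorem \ref{90}(1) with $N=\phi(Q)$: the hypothesis $\phi(Q)\subseteq N$ holds with equality, so $Q/\phi(Q)$ is an $n$-ary weakly classical prime subhypermodule of $M/\phi(Q)$. For $(2)\Rightarrow(1)$ I would first reread the assumption ``$Q/\phi(Q)$ is weakly classical prime'' as ``$Q/\phi(Q)$ is $\phi_N$-classical prime'' via the equivalence just established, and then invoke Theorem \ref{90}(2) with $N=\phi(Q)$ (whose hypothesis $N\subseteq\phi(Q)$ again holds with equality) to conclude that $Q$ is an $n$-ary $\phi$-classical prime subhypermodule of $M$.

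I expect the only real obstacle to be the bookkeeping behind the equality $\phi_{\phi(Q)}(Q/\phi(Q))=\{\bar 0\}$ and the resulting coincidence of the two primeness notions on $M/\phi(Q)$; this rests entirely on unwinding the definition of $\phi_N$ and on recognizing weakly classical prime as the zero instance of $\phi$-classical prime. Once that is settled, no further computation is needed, since each direction of the corollary is exactly one half of Theorem \ref{90} read at $N=\phi(Q)$.
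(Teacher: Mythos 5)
Your proposal is correct and takes essentially the same route as the paper: the paper offers no separate argument, simply deriving the corollary from Theorem \ref{90}, which is exactly your specialization $N=\phi(Q)$ (part (1) for one direction, part (2) for the other). Your extra bookkeeping --- that $\phi_{\phi(Q)}(Q/\phi(Q))=f(\phi(Q),\phi(Q),0^{(m-2)})/\phi(Q)=\{\bar{0}\}$, so that $\phi_N$-classical prime and weakly classical prime coincide on $M/\phi(Q)$ --- is precisely the unstated step the paper's citation of Theorem \ref{90} relies on.
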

\begin{theorem}
Suppose that   $Q$ is  a proper subhypermodule of an $(m,n)$-hypermodule $M$ over $R$ and $ \phi : \mathcal{SH}(M) \longrightarrow \mathcal{SH}(M) \cup \{\varnothing\}$ and  $\phi^{\prime} : \mathcal{HI}(R) \longrightarrow \mathcal{HI}(R) \cup \{\varnothing\}$  are two functions such that $\mathcal{HI}(R)$  is the set of all hyperideals of  $R$. Then the followings hold:
\begin{itemize} 
\item[\rm(1)]~ Let $Q$ be an $n$-ary $\phi$-classical prime subhypermodule of $M$. Then    $g^{\prime}(r_1^n) \in Q_a-\phi^{\prime}(Q_a)$ for $r_1^n \in R$ and for all $a \in M-Q$ with $\phi(Q)_a \subseteq \phi^{\prime}(Q_a)$ implies that $r_i \in Q_a$ for some $1 \leq i \leq n$. 
\item[\rm(2)]~ If  $g^{\prime}(r_1^n) \in Q_a-\phi(Q_a)$ for some $r_1^n \in R$ and for every $a \in M-Q$ with $\phi^{\prime}(Q_a) \subseteq \phi(Q)_a$ implies that $r_i \in Q_a$ for some $1 \leq i \leq n$, then $Q$ is an $n$-ary $\phi$-classical prime subhypermodule of $M$.
\end{itemize}
\end{theorem}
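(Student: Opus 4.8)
The plan is to treat this theorem as the $\phi$-analogue of Theorem \ref{21}, so both implications rest on the same device used there: rewriting the $n$-ary ring product $g^{\prime}(r_1^n)$ acting on a module element as an ordinary $(n-1)$-fold scalar action, by collapsing two of the ring entries into a single scalar via the mixed-associativity axiom relating $g$ and $g^{\prime}$. Concretely, using $g(g^{\prime}(r_1^n),1^{(n-2)},a)=g(r_1^{n-2},g^{\prime}(r_{n-1}^n,1^{(n-2)}),a)$ together with the identity $g(g^{\prime}(s_1^{n-1},1),1^{(n-2)},b)=g(s_1^{n-1},b)$, membership of a ring element in $Q_a$ (resp. $\phi(Q)_a$, $\phi^{\prime}(Q_a)$) is translated into containment of the corresponding hyperproduct in $Q$ (resp. $\phi(Q)$), and conversely.

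For part (1), I would start from $g^{\prime}(r_1^n)\in Q_a-\phi^{\prime}(Q_a)$ with $\phi(Q)_a\subseteq\phi^{\prime}(Q_a)$. The inclusion $g^{\prime}(r_1^n)\in Q_a$ gives $g(g^{\prime}(r_1^n),1^{(n-2)},a)\subseteq Q$, while $g^{\prime}(r_1^n)\notin\phi^{\prime}(Q_a)$ together with $\phi(Q)_a\subseteq\phi^{\prime}(Q_a)$ forces $g^{\prime}(r_1^n)\notin\phi(Q)_a$, hence $g(g^{\prime}(r_1^n),1^{(n-2)},a)\nsubseteq\phi(Q)$. Thus the hyperproduct lies in $Q-\phi(Q)$, exactly the hypothesis in the definition of an $n$-ary $\phi$-classical prime subhypermodule. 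After rewriting it as $g(r_1^{n-2},g^{\prime}(r_{n-1}^n,1^{(n-2)}),a)$, the defining property yields either $g(r_i,1^{(n-2)},a)\subseteq Q$ for some $i\le n-2$, giving $r_i\in Q_a$, or $g(g^{\prime}(r_{n-1}^n,1^{(n-2)}),1^{(n-2)},a)=g(r_{n-1},r_n,1^{(n-3)},a)\subseteq Q$; a second application, discarding the identity entries because $g(1^{(n-1)},a)=\{a\}\nsubseteq Q$ as $a\notin Q$, then gives $r_{n-1}\in Q_a$ or $r_n\in Q_a$.

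For part (2), I would verify the defining property of $Q$ directly. Given $g(s_1^{n-1},b)\subseteq Q-\phi(Q)$, the case $b\in Q$ is immediate since $g(s_i,1^{(n-2)},b)\subseteq Q$ for every $i$ by subhypermodule closure, so assume $b\in M-Q$. Setting $r_i=s_i$ for $1\le i\le n-1$ and $r_n=1$, the identity $g(g^{\prime}(s_1^{n-1},1),1^{(n-2)},b)=g(s_1^{n-1},b)$ shows $g^{\prime}(s_1^{n-1},1)\in Q_b$, and, since $g(s_1^{n-1},b)\nsubseteq\phi(Q)$ gives $g^{\prime}(s_1^{n-1},1)\notin\phi(Q)_b$ while $\phi^{\prime}(Q_b)\subseteq\phi(Q)_b$, also $g^{\prime}(s_1^{n-1},1)\notin\phi^{\prime}(Q_b)$. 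Hence the hypothesis applies with $a=b$ and returns $r_i\in Q_b$ for some $i$; the index $i=n$ is excluded because $1\in Q_b$ would force $b\in Q$, so some $s_j\in Q_b$, i.e. $g(s_j,1^{(n-2)},b)\subseteq Q$, which is what is required.

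The main obstacle I expect is the mixed-associativity bookkeeping needed to justify the rewritings $g(g^{\prime}(r_1^n),1^{(n-2)},a)=g(r_1^{n-2},g^{\prime}(r_{n-1}^n,1^{(n-2)}),a)$ and $g(g^{\prime}(r_{n-1}^n,1^{(n-2)}),1^{(n-2)},a)=g(r_{n-1},r_n,1^{(n-3)},a)$ from the associativity axiom and the scalar identity, and to keep track of which inclusion hypothesis transports the non-membership condition between $\phi^{\prime}(Q_a)$ and $\phi(Q)_a$ in each direction. A secondary delicate point is that the collapsed case applies the defining property twice: to legitimize the second application one must check that the factor $g^{\prime}(r_{n-1}^n,1^{(n-2)})$ again avoids $\phi(Q)_a$, which follows from $g^{\prime}(r_1^n)\notin\phi(Q)_a$ because $\phi(Q)_a$ is a hyperideal and hence closed under the ring product. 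Excluding the identity entries through $a\notin Q$ then finishes both directions, and everything else is routine translation between hyperideal membership and hyperproduct containment.
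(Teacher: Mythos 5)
Your proposal is correct and takes essentially the same route as the paper's own proof: both translate membership $g^{\prime}(r_1^n)\in Q_a-\phi^{\prime}(Q_a)$ into the containment $g(g^{\prime}(r_1^n),1^{(n-2)},a)\subseteq Q-\phi(Q)$ using the inclusion hypothesis between $\phi(Q)_a$ and $\phi^{\prime}(Q_a)$, rewrite the hyperproduct by associativity as $g(r_1^{n-2},g^{\prime}(r_{n-1}^n,1^{(n-2)}),a)$, apply the $\phi$-classical prime property (twice in part (1)), and in part (2) set $r_n=1$ and rule out the index $n$ because $a\notin Q$. If anything, you are more careful than the paper, which silently performs the second application in part (1); your remark that $g^{\prime}(r_{n-1}^n,1^{(n-2)})\notin\phi(Q)_a$ follows from $g^{\prime}(r_1^n)\notin\phi(Q)_a$ by hyperideal absorption supplies exactly the justification the paper omits.
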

\begin{proof}
$(1)$ Let $Q$ be an $n$-ary $\phi$-classical prime subhypermodule of $M$. Pick $a \in M-Q$ with $\phi(Q)_a \subseteq \phi^{\prime}(Q_a)$. Assume that $g^{\prime}(r_1^n) \in Q_a-\phi^{\prime}(Q_a)$ for some $r_1^n \in R$. This means $g(g^{\prime}(r_1^n),1^{(n-2)},a)=g(r_1^{n-2},g^{\prime}(r_{n-1}^n,1^{(n-2)}),a) \subseteq Q-\phi(Q)$. Since $Q$ is an $n$-ary $\phi$-classical prime subhypermodule of $M$, then $g(r_i,1^{(n-2)},a) \subseteq Q$ for some $1 \leq i \leq n-2$ or $g(g^{\prime}(r_{n-1}^n,1^{(n-2)}),1^{(n-2)},a)=g(r_{n-2},r_n,1^{(n-2)},a) \subseteq Q$. In the second possibility, we have $g(r_i,1^{(n-2)},a) \subseteq Q$ for some $i \in \{n-1,n\}$. Then we conclude that $r_i \in Q_a$ for some $1 \leq i \leq n$, as needed.

$(2)$ Suppose that  $g(r_1^{n-1},a) \subseteq Q-\phi(Q)$ for some $r_1^{n-1} \in R$ and $a \in M$. Let $a \in Q$. Then  the claim follows. If $a \notin Q$.  From $g^{\prime}(r_1^{n-1},1) \in Q_a-\phi^{\prime}(Q_a)$, it follows that $r_i \in Q_a$ for some $1 \leq i \leq n-1$. Hence $g(r_i ,1^{(n-2)},a) \subseteq Q$. Consequently, $Q$ is an $n$-ary $\phi$-classical prime subhypermodule of $M$.
\end{proof}

\begin{theorem}
Let $(M_1,f_1,g_1)$ and $(M_2,f_2,g_2)$ be two $(m,n)$-hypermodules over $R$ and $h:M_1 \longrightarrow M_2$ be an epimorphism. Let $ \phi_1 : \mathcal{SH}(M_1) \longrightarrow \mathcal{SH}(M_1) \cup \{\varnothing\}$ and $ \phi_2 : \mathcal{SH}(M_2) \longrightarrow \mathcal{SH}(M_2) \cup \{\varnothing\}$ be two functions. 
\begin{itemize} 
\item[\rm(1)]~ If $Q_2$ is an $n$-ary $\phi_2$-classical prime subhypermodule of $M_2$ such that $\phi_1(h^{-1}(Q_2))=h^{-1}(\phi_2(Q_2))$, then $h^{-1} (Q_2)$ is an $n$-ary $\phi_1$-classical prime subhypermodule of $M_1$.
\item[\rm(2)]~ If $Q_1$ is an $n$-ary $\phi_1$-classical prime subhypermodule of $M_1$ such that $ Ker(h) \subseteq Q_1 $ and $\phi_2(h(Q_1))=h(\phi_1(Q_1))$, then $h(Q_1)$ is an $n$-ary $\phi_2$-classical prime subhypermodule of $M_2$.
\end{itemize} 
\end{theorem}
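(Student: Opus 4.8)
The plan is to mimic the image/preimage arguments used for weakly classical prime subhypermodules in Theorem \ref{22}, but now to carry along the extra set $\phi(Q)$ inside the difference $Q-\phi(Q)$. Throughout I would use that $h$ is a homomorphism, so that $h(g_1(r_1^{n-1},a))=g_2(r_1^{n-1},h(a))$ for all $r_1^{n-1}\in R$ and $a\in M_1$, together with the standard fact that the image and preimage of a subhypermodule under an epimorphism is again a subhypermodule.

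For part (1), first I would check that $h^{-1}(Q_2)$ is proper: if $h^{-1}(Q_2)=M_1$ then, since $h$ is onto, $M_2=h(M_1)\subseteq Q_2$, contradicting properness of $Q_2$. Then I would assume $g_1(r_1^{n-1},a_1)\subseteq h^{-1}(Q_2)-\phi_1(h^{-1}(Q_2))$ for some $r_1^{n-1}\in R$ and $a_1\in M_1$. Using the hypothesis $\phi_1(h^{-1}(Q_2))=h^{-1}(\phi_2(Q_2))$, this inclusion reads as $g_1(r_1^{n-1},a_1)\subseteq h^{-1}(Q_2)$ together with $g_1(r_1^{n-1},a_1)\cap h^{-1}(\phi_2(Q_2))=\varnothing$. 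Applying $h$ and the homomorphism identity gives $g_2(r_1^{n-1},h(a_1))=h(g_1(r_1^{n-1},a_1))\subseteq Q_2$, while the disjointness translates (an element mapping into $\phi_2(Q_2)$ would lie in $h^{-1}(\phi_2(Q_2))$) into $g_2(r_1^{n-1},h(a_1))\cap\phi_2(Q_2)=\varnothing$. Hence $g_2(r_1^{n-1},h(a_1))\subseteq Q_2-\phi_2(Q_2)$, and since $Q_2$ is $n$-ary $\phi_2$-classical prime I would obtain $g_2(r_i,1^{(n-2)},h(a_1))\subseteq Q_2$ for some $1\leq i\leq n-1$. Pulling back through $h$ then yields $g_1(r_i,1^{(n-2)},a_1)\subseteq h^{-1}(Q_2)$, as required.

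For part (2), I would first note that $Ker(h)\subseteq Q_1$ forces $h^{-1}(h(Q_1))=Q_1$; this immediately gives properness of $h(Q_1)$, since $h(Q_1)=M_2$ would yield $Q_1=h^{-1}(M_2)=M_1$. Next I would assume $g_2(r_1^{n-1},a_2)\subseteq h(Q_1)-\phi_2(h(Q_1))$ and pick $a_1\in M_1$ with $h(a_1)=a_2$ (possible since $h$ is onto). Using $\phi_2(h(Q_1))=h(\phi_1(Q_1))$ and the homomorphism identity, the inclusion $g_2(r_1^{n-1},a_2)=h(g_1(r_1^{n-1},a_1))\subseteq h(Q_1)$ gives $g_1(r_1^{n-1},a_1)\subseteq h^{-1}(h(Q_1))=Q_1$, and the disjointness from $h(\phi_1(Q_1))$ forces $g_1(r_1^{n-1},a_1)\cap\phi_1(Q_1)=\varnothing$ (if some element of $g_1(r_1^{n-1},a_1)$ lay in $\phi_1(Q_1)$, its image would lie in $h(\phi_1(Q_1))=\phi_2(h(Q_1))$, contradicting the disjointness). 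Therefore $g_1(r_1^{n-1},a_1)\subseteq Q_1-\phi_1(Q_1)$, and the $\phi_1$-classical primeness of $Q_1$ gives $g_1(r_i,1^{(n-2)},a_1)\subseteq Q_1$ for some $i$; applying $h$ yields $g_2(r_i,1^{(n-2)},a_2)\subseteq h(Q_1)$, completing the argument.

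The step I expect to require the most care is the faithful translation of the set difference $Q-\phi(Q)$ across $h$. The inclusion $g(r_1^{n-1},a)\subseteq Q-\phi(Q)$ must be read as the conjunction $g(r_1^{n-1},a)\subseteq Q$ together with $g(r_1^{n-1},a)\cap\phi(Q)=\varnothing$, and it is precisely the two compatibility hypotheses $\phi_1(h^{-1}(Q_2))=h^{-1}(\phi_2(Q_2))$ and $\phi_2(h(Q_1))=h(\phi_1(Q_1))$ that let me move each half of this conjunction through $h$ without losing information. In part (2) the condition $Ker(h)\subseteq Q_1$ is what guarantees $h^{-1}(h(Q_1))=Q_1$, so that no spurious elements are introduced when pulling $g_1(r_1^{n-1},a_1)$ back into $Q_1$. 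Everything else is a routine application of the homomorphism identity $h(g_1(r_1^{n-1},a))=g_2(r_1^{n-1},h(a))$ already recorded in the definition of an $(m,n)$-hypermodule homomorphism.
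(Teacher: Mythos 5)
Your proof is correct and follows essentially the same route as the paper's: apply the homomorphism identity $h(g_1(r_1^{n-1},a))=g_2(r_1^{n-1},h(a))$ to transport the inclusion into $Q-\phi(Q)$ across $h$, invoke the $\phi$-classical primeness on the other side, and transport the conclusion back. In fact your write-up is slightly more careful than the paper's, since you explicitly unpack $\subseteq Q-\phi(Q)$ as containment plus disjointness (where the paper loosely writes $\nsubseteq\phi_2(Q_2)$) and you make explicit the role of $Ker(h)\subseteq Q_1$ via $h^{-1}(h(Q_1))=Q_1$, which the paper uses silently.
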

\begin{proof}
$(1)$ Assume that $g_1(r_1^{n-1},a_1) \subseteq h^{-1}(Q_2)-\phi_1(h^{-1}(Q_2))$ for some $r_1^{n-1} \in R$ and $a_1 \in M_1$. This means $h(g_1(r_1^{n-1},a_1)=g_2(r_1^{n-1},h(a_1)) \subseteq Q_2$. From $\phi_1(h^{-1}(Q_2))=h^{-1}(\phi_2(Q_2))$, it follows that $g_2(r_1^{n-1},h(a_1)) \nsubseteq \phi_2(Q_2)$. Since $Q_2$ is an $n$-ary $\phi_2$-classical prime subhypermodule of $M_2$ and $g_2(r_1^{n-1},h(a_1)) \subseteq Q_2-\phi_2(Q_2)$, we get $g_2(r_i,1^{(n-2)},h(a_1)) \subseteq Q_2$ for some $1 \leq i \leq i-1$. Then  $h(g_1(r_i,1^{(n-2)},a_1) \subseteq Q_2$ and so $g_1(r_i,1^{(n-2)},a_1) \subseteq h^{-1}(Q_2)$. Thus $h^{-1} (Q_2)$ is an $n$-ary $\phi_1$-classical prime subhypermodule of $M_1$.

$(2)$ Suppose that $g_2(r_1^{n-1},a_2) \subseteq h(Q_1)-\phi_2(h(Q_1))$ for some $r_1^{n-1} \in R$ and $a_2 \in M_2$. Since $h$ is an epimorphism, we have $h(a_1)=a_2$ for some $a_1 \in M_1$. Hence $h(g_1(r_1^{n-1},a_1))=g_2(r_1^{n-1},h(a_1))=g_2(r_1^{n-1},a_2) \subseteq h(Q_1)$ and so $g_1(r_1^{n-1},a_1) \subseteq Q_1$. From $\phi_2(h(Q_1))=h(\phi_1(Q_1))$, it follows that $g_1(r_1^{n-1},a_1) \subseteq Q_1-\phi(Q_1)$. Since $Q_1$ is an $n$-ary $\phi_1$-classical prime subhypermodule of $M_1$, we conclude that $g_1(r_i,1^{(n-2)},a_1) \subseteq Q_1$ for some $1 \leq i \leq n-1$. Thus we get $h(g_1(r_i,1^{(n-2)},a_1))=g_2(r_i,1^{(n-2)},h(a_1))=g_2(r_i,1^{(n-2)},a_2)\subseteq h(Q_1)$. Consequently,  $h(Q_1)$ is an $n$-ary $\phi_2$-classical prime subhypermodule of $M_2$.
\end{proof}
\begin{theorem}
Let   $Q$ be  a proper subhypermodule of an $(m,n)$-hypermodule $M$ over $R$ and $ \phi : \mathcal{SH}(M) \longrightarrow \mathcal{SH}(M) \cup \{\varnothing\}$ be a function. If $Q$ bis  an $n$-ary $\phi$-classical prime subhypermodule of $M$, then $Q_{g(r_1^{n-1},a)} \subseteq \phi(Q)_{g(r_1^{n-1},a)} \cup Q_{g(r_1,1^{(n-2)},a)} \cup \cdots \cup Q_{g(r_{n-1},1^{(n-2)},a)}$ for all $r_1^{n-1} \in R$ and $a \in M$.
\end{theorem}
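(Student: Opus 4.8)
The plan is to run exactly the argument used for the weakly classical prime analogue proved earlier (the theorem asserting $Q_{g(r_1^{n-1},a)} \subseteq F_{g(r_1^{n-1},a)} \cup Q_{g(r_1,1^{(n-2)},a)} \cup \cdots \cup Q_{g(r_{n-1},1^{(n-2)},a)}$), with $\phi(Q)_{g(r_1^{n-1},a)}$ playing the role that $F_{g(r_1^{n-1},a)}$ played there. Concretely, the dichotomy ``$0$ lies in the relevant product or not'' gets replaced by the dichotomy ``the product lands inside $\phi(Q)$ or not.'' First I would fix $r_1^{n-1} \in R$ and $a \in M$ and take an arbitrary $x \in Q_{g(r_1^{n-1},a)}$; unwinding the definition, this says $g(x,1^{(n-2)},g(r_1^{n-1},a)) \subseteq Q$. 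The goal is then to show that $x$ belongs to at least one of the sets on the right-hand side.

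Next I would split into two cases according to whether $g(x,1^{(n-2)},g(r_1^{n-1},a)) \subseteq \phi(Q)$. If it does, then directly $x \in \phi(Q)_{g(r_1^{n-1},a)}$ and that instance is finished. Otherwise we have $g(x,1^{(n-2)},g(r_1^{n-1},a)) \subseteq Q - \phi(Q)$, which is precisely the hypothesis under which the $n$-ary $\phi$-classical prime property of $Q$ may be invoked.

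To apply that property I must first move the scalar $x$ past the scalars $r_1^{n-1}$. Using the commutativity of the $n$-ary multiplication $g^{\prime}$ together with the mixed-associativity axiom (iii) of an $(m,n)$-hypermodule, I would rewrite
\[
g(x,1^{(n-2)},g(r_1^{n-1},a)) = g(r_1^{n-1}, g(x,1^{(n-2)},a)) \subseteq Q - \phi(Q).
\]
Since $Q$ is $n$-ary $\phi$-classical prime, this yields $g(r_i,1^{(n-2)}, g(x,1^{(n-2)},a)) \subseteq Q$ for some $1 \leq i \leq n-1$; rearranging the scalars back gives $g(x,1^{(n-2)}, g(r_i,1^{(n-2)},a)) \subseteq Q$, which is exactly the statement $x \in Q_{g(r_i,1^{(n-2)},a)}$. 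Collecting the two cases as $x$ ranges over $Q_{g(r_1^{n-1},a)}$ produces the claimed inclusion.

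The step I expect to be the genuine obstacle is the application of the $\phi$-classical prime definition when the ``element'' being multiplied is the whole subset $g(x,1^{(n-2)},a)$ of $M$ rather than a single point; exactly as in the weakly classical analogue, one must read the defining implication for such products and justify the scalar rearrangement set-equality $g(x,1^{(n-2)},g(r_1^{n-1},a)) = g(r_1^{n-1}, g(x,1^{(n-2)},a))$ carefully from axiom (iii) and the commutativity of $g^{\prime}$. A minor bookkeeping point worth recording is that the term $\phi(Q)_{g(r_1^{n-1},a)}$ on the right-hand side is only meaningful when $\phi(Q) \neq \varnothing$, which is implicit in the statement.
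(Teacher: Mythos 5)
Your setup and case split match the paper's, but the step you yourself flagged as ``the genuine obstacle'' is a real gap, and your proposal does not close it. The definition of an $n$-ary $\phi$-classical prime subhypermodule speaks about a single element $a\in M$: from $g(r_1^{n-1},a)\subseteq Q-\phi(Q)$ it gives one index $i$ with $g(r_i,1^{(n-2)},a)\subseteq Q$. After your rewriting $g(x,1^{(n-2)},g(r_1^{n-1},a))=g(r_1^{n-1},g(x,1^{(n-2)},a))$, the module argument is the subset $X=g(x,1^{(n-2)},a)$, which in a hypermodule need not be a singleton. You may legitimately apply the definition to each $y\in X$ separately, since $g(r_1^{n-1},y)\subseteq g(r_1^{n-1},X)\subseteq Q-\phi(Q)$; but this yields an index $i_y$ with $g(r_{i_y},1^{(n-2)},y)\subseteq Q$ that can vary with $y$, whereas the conclusion $x\in Q_{g(r_i,1^{(n-2)},a)}$ requires a \emph{single} $i$ working for every $y\in X$ simultaneously. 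Nothing in your argument produces such a uniform $i$. Note also that the weakly classical prime theorem you use as a template commits exactly the same unjustified element-to-set passage in the paper, so mimicking it is not a justification; indeed, upgrading the defining implication from elements to sets is genuinely delicate in this theory, which is why the paper's results in that direction carry extra hypotheses (the ``classical $(m,n)$-zero'' conditions) and are only proved for $(3,3)$-hypermodules.

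The paper's own proof of this theorem avoids the problem by moving $x$ to the \emph{ring} side instead of the module side. Since $g^{\prime}$ is an $n$-ary operation (not a hyperoperation), $g^{\prime}(r_{n-1},x,1^{(n-2)})$ is an honest element of $R$, and one writes
\[
g(x,1^{(n-2)},g(r_1^{n-1},a))=g\bigl(r_1^{n-2},\,g^{\prime}(r_{n-1},x,1^{(n-2)}),\,a\bigr)\subseteq Q-\phi(Q),
\]
so the $\phi$-classical prime property is applied to the single element $a$ with the $n-1$ scalars $r_1,\dots,r_{n-2},g^{\prime}(r_{n-1},x,1^{(n-2)})$. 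If the resulting index is some $i\leq n-2$, then $g(r_i,1^{(n-2)},a)\subseteq Q$, hence $g(x,1^{(n-2)},g(r_i,1^{(n-2)},a))\subseteq Q$ because $Q$ is a subhypermodule, i.e.\ $x\in Q_{g(r_i,1^{(n-2)},a)}$; if the index falls on the combined scalar, then $g(g^{\prime}(r_{n-1},x,1^{(n-2)}),1^{(n-2)},a)=g(x,1^{(n-2)},g(r_{n-1},1^{(n-2)},a))\subseteq Q$, i.e.\ $x\in Q_{g(r_{n-1},1^{(n-2)},a)}$. Replacing your second case by this scalar-absorption argument repairs the proof with no other changes.
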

\begin{proof}
Let $x \in Q_{g(r_1^{n-1},a)}$. This means that $g(x,1^{(n-2)},g(r_1^{n-1},a)) \subseteq Q$. Let $g(x,1^{(n-2)},g(r_1^{n-1},a)) \subseteq \phi(Q)$. It implies that $x \in \phi(Q)_{g(r_1^{n-1},a)}$, as needed. So we consider $g(x,1^{(n-2)},g(r_1^{n-1},a)) \nsubseteq \phi(Q)$. Since $Q$ is  an $n$-ary $\phi$-classical prime subhypermodule of $M$ and $g(x,1^{(n-2)},g(r_1^{n-1},a))=g(r_1^{n-2},g^{\prime}(r_{n-1},x,1^{(n-2)}),a) \subseteq Q-\phi(Q)$, we conclude that  $g(r_i,1^{(n-2)},a) \subseteq Q$ for some $1 \leq i \leq n-2$ or $g(g^{\prime}(r_{n-1},x,1^{(n-2)}),1^{(n-2)},a)=g(x,1^{(n-2)},g(r_{n-1},1^{(n-2)},a)) \subseteq Q$.
In the former case, we get $g(x,1^{(n-2)},g(r_i,1^{(n-1)},a)) \subseteq Q$ which means $x \in Q_{g(r_i,1^{(n-2)},a)}$ for some $1 \leq i \leq n-2$. In the second case, we obtain $x \in Q_{g(r_{n-1},1^{(n-2)},a)}$. Then the claim is proved.
\end{proof}
The following theorem offers a characterization of $n$-ary $\phi$-classical prime subhypermodules of $M$.
\begin{theorem} \label{multiplication}
Let   $Q$ be  a proper subhypermodule of an $(m,n)$-hypermodule $M$ over $R$ and $ \phi : \mathcal{SH}(M) \longrightarrow \mathcal{SH}(M) \cup \{\varnothing\}$ be a function. Then $Q$ is an $n$-ary $\phi$-classical prime subhypermodule of $M$ if and only if for every hyperideals $I_1^{n-1}$ of $R$  and $a \in M$, $g(I_1^{n-1},a) \subseteq Q-\phi(Q)$ implies that $g(I_i,1^{(n-2)},a) \subseteq Q$ for some $1 \leq i \leq n-1$.
\end{theorem}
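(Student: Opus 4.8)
My plan is to establish the two implications separately; the implication from the ideal-theoretic condition back to the definition is routine, whereas the forward implication conceals a combinatorial difficulty created by the set difference $Q-\phi(Q)$.

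For the direction that assumes the stated property for hyperideals, I would recover the definition by specialising to principal hyperideals. Given $r_1^{n-1}\in R$ and $a\in M$ with $g(r_1^{n-1},a)\subseteq Q-\phi(Q)$, put $I_i=\langle r_i\rangle$ for each $i$. Since $r_i\in\langle r_i\rangle$, we have $g(r_1^{n-1},a)\subseteq g(\langle r_1\rangle,\dots,\langle r_{n-1}\rangle,a)$; moreover, using the associativity axiom relating $g^{\prime}$ and $g$ together with the commutativity of $g^{\prime}$ to factor the generators out, $g(\langle r_1\rangle,\dots,\langle r_{n-1}\rangle,a)\subseteq g(R^{(n-1)},g(r_1^{n-1},a))\subseteq Q$ because $Q$ is a subhypermodule. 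As $g(r_1^{n-1},a)\nsubseteq\phi(Q)$ forces $g(\langle r_1\rangle,\dots,\langle r_{n-1}\rangle,a)\nsubseteq\phi(Q)$, the hypothesis yields $g(\langle r_i\rangle,1^{(n-2)},a)\subseteq Q$ for some $i$, and restricting to the generator $r_i$ gives $g(r_i,1^{(n-2)},a)\subseteq Q$, which is exactly the definition.

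For the forward direction, assume $Q$ is $n$-ary $\phi$-classical prime, suppose $g(I_1^{n-1},a)\subseteq Q-\phi(Q)$, and argue by contradiction assuming $g(I_i,1^{(n-2)},a)\nsubseteq Q$ for every $i$. Choosing $a_i\in I_i$ with $g(a_i,1^{(n-2)},a)\nsubseteq Q$ gives $g(a_1^{n-1},a)\subseteq g(I_1^{n-1},a)\subseteq Q$. If $g(a_1^{n-1},a)\nsubseteq\phi(Q)$, then $g(a_1^{n-1},a)\subseteq Q-\phi(Q)$ and the definition immediately forces some $g(a_i,1^{(n-2)},a)\subseteq Q$, a contradiction; hence I may assume $g(a_1^{n-1},a)\subseteq\phi(Q)$. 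On the other hand, $g(I_1^{n-1},a)\nsubseteq\phi(Q)$ supplies $b_i\in I_i$ with $g(b_1^{n-1},a)\subseteq Q$ but $g(b_1^{n-1},a)\nsubseteq\phi(Q)$.

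The heart of the argument is a combining step whose goal is to manufacture a single tuple $(c_1,\dots,c_{n-1})$ with $c_i\in I_i$, with $g(c_i,1^{(n-2)},a)\nsubseteq Q$ for all $i$, and yet with $g(c_1^{n-1},a)\nsubseteq\phi(Q)$: such a tuple has $g(c_1^{n-1},a)\subseteq Q-\phi(Q)$, so the definition forces some $g(c_i,1^{(n-2)},a)\subseteq Q$, the desired contradiction. I would build the $c_i$ from the witnesses $a_i$ and $b_i$ using the $m$-ary hyperoperation: each $f^{\prime}(a_i,b_i,0^{(m-2)})$ lies in $I_i$ (as $I_i$ is an $m$-ary subhypergroup containing $0$), distributivity expands $g$ over such sums, and reversibility of the canonical $m$-ary hypergroup gives $f^{\prime}(u,v,0^{(m-2)})\nsubseteq Q$ whenever $u\in Q$ and $v\notin Q$, and the same statement with $\phi(Q)$ in place of $Q$. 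I expect the genuine obstacle to be exactly that the coordinate-wise requirement and the global non-containment in $\phi(Q)$ are a priori witnessed by different elements of the sets $f^{\prime}(a_i,b_i,0^{(m-2)})$, so one cannot read off a single admissible tuple at once. I would resolve this by an induction on the number of ``good'' coordinates $J=\{i:g(b_i,1^{(n-2)},a)\subseteq Q\}$ of the $\phi$-escaping witness $b$: at each step I replace one good coordinate $i_0$ by a suitable element of $f^{\prime}(b_{i_0},a_{i_0},0^{(m-2)})$, splitting according to whether the partial product obtained by inserting $a_{i_0}$ already escapes $\phi(Q)$; in the first subcase no combination is needed, while in the second the reversibility estimate applied to $\phi(Q)$ keeps the total product outside $\phi(Q)$ as the coordinate is turned bad. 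Driving $J$ down to the empty set then yields an all-bad, $\phi$-escaping tuple and closes the argument.
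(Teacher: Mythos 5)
Your backward direction is essentially the paper's own argument: specialize to the principal hyperideals $\langle r_i\rangle$, use the hypermodule axioms to get $g(\langle r_1\rangle,\dots,\langle r_{n-1}\rangle,a)\subseteq Q$, and observe that this set contains $g(r_1^{n-1},a)$, hence is not contained in $\phi(Q)$. (Like the paper, you then invoke the hyperideal hypothesis having verified only $\nsubseteq\phi(Q)$, whereas "$\subseteq Q-\phi(Q)$" literally demands disjointness from $\phi(Q)$; since this imprecision is the paper's as well, I set it aside.)

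The forward direction is where you depart from the paper, and where the genuine problem lies. The combinatorial difficulty you build your argument around does not exist for the statement as written. The hypothesis is that the \emph{entire} set $g(I_1^{n-1},a)$ lies in the set difference $Q-\phi(Q)$; since $g(r_1^{n-1},a)\subseteq g(I_1^{n-1},a)$ for every choice of $r_i\in I_i$, every element-level product automatically lies in $Q$ and is disjoint from $\phi(Q)$. So the paper's proof is immediate: if $g(I_i,1^{(n-2)},a)\nsubseteq Q$ for all $i$, pick witnesses $r_i\in I_i$ with $g(r_i,1^{(n-2)},a)\nsubseteq Q$; the tuple $(r_1,\dots,r_{n-1})$ satisfies the definition's premise outright, giving a contradiction. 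Your case split and combining machinery are aimed at a different theorem, namely the one whose hypothesis is "$g(I_1^{n-1},a)\subseteq Q$ and $g(I_1^{n-1},a)\nsubseteq\phi(Q)$" (the form used in the classical module literature). This shows up in your write-up as an invalid inference: "if $g(a_1^{n-1},a)\nsubseteq\phi(Q)$ then $g(a_1^{n-1},a)\subseteq Q-\phi(Q)$" is false as stated, because non-containment in $\phi(Q)$ is not disjointness from $\phi(Q)$; it becomes correct only if one silently re-reads every occurrence of $Q-\phi(Q)$ in that weaker sense. Under the actual hypothesis, the subcase your whole construction is designed to handle, $g(a_1^{n-1},a)\subseteq\phi(Q)$, cannot occur at all, since $g(a_1^{n-1},a)$ is already disjoint from $\phi(Q)$. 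Finally, even if one adopts the weaker reading throughout (making your theorem genuinely harder than the stated one), your proof is incomplete precisely at its heart: the induction that "turns good coordinates bad" while keeping the product outside $\phi(Q)$ is only announced, and the delicate point --- that a single element of the hypersum $f^{\prime}(a_{i_0},b_{i_0},0^{(m-2)})$ can be chosen to witness simultaneously the coordinate-wise non-containment in $Q$ and the global escape from $\phi(Q)$ --- is exactly what is never checked. For the theorem as stated, the fix is to discard the machinery and use the one-line subset argument.
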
\begin{proof}
$(\Longrightarrow)$ Assume that $g(I_1^{n-1},a) \subseteq Q-\phi(Q)$ for some hyperideals $I_1^{n-1}$ of $R$ and $a \in M$ but $g(I_i,1^{(n-2)},a) \nsubseteq Q$ for all $1 \leq i \leq n-1$. Then there exists $r_i \in I_i$ for each $1 \leq i \leq n-1$ such that $g(r_i,1^{(n-2)},a) \nsubseteq Q$. Since $Q$ is an $n$-ary $\phi$-classical prime subhypermodule of $M$ and $g(r_1^{n-1},a) \subseteq Q-\phi(Q)$, we conclude that $g(r_i,1^{(n-2)},a) \subseteq Q$ for some $1 \leq i \leq n-1$ which is a contradiction.

$(\Longleftarrow)$ Suppose that $g(r_1^{n-1},a) \subseteq Q-\phi(Q)$ for some $r_1^{n-1} \in R$ and $a \in M$. Then we have $g(\langle r_1 \rangle, \cdots, \langle r_{n-1} \rangle, a) \subseteq Q$. Since $g(r_1^{n-1},a) \nsubseteq \phi(Q)$, then we conclude that $g( \langle r_1 \rangle, \cdots,  \langle r_n \rangle, a) \nsubseteq \phi(Q)$. By the hypothesis, we have $g(\langle r_i \rangle, 1^{(n-2)},a) \subseteq Q$ for some $1 \leq i \leq n-1$. Therefore we get $g(r_i,1^{(n-2)},a) \subseteq Q$ which means $Q$ is an $n$-ary $\phi$-classical prime subhypermodule of $M$.
\end{proof}
Recall from \cite{sorc2} that an $(m,n)$-hypermodule $M$ over $R$ is a multiplication $(m,n)$-hypermodule if for every subhypermodule $K$ of $M$, there exists a hyperideal $I$ of $R$ with $K=g(I,1^{(n-2)},M)$. Let $K_i$ be a subhypermodule of a multiplication  $(m,n)$-hypermodule $M$ for each $1 \leq i \leq n-1$ such that $K_i=g(I_i,1^{(n-2)},M)$ for some hyperideal $I_i$ of $R$. Then the product of $K_1,\cdots,K_n$ denoted by $g(K_1^n)$ is defined by $g(K_1^n)=g(g^{\prime}((I_1^n),1^{(n-2)},M)$. Also, we define $g(K_1^{n-1},a)=g(I_1^{n-1},a)$ and $g(K_i,M^{(n-2)},a)=g(I_i,1^{(n-2)},a)$ for each $1 \leq i \leq n-1$ and for any $a \in M$.
\begin{theorem}
Let   $Q$ be  a proper subhypermodule of a multiplication $(m,n)$-hypermodule $M$ over $R$ and $ \phi : \mathcal{SH}(M) \longrightarrow \mathcal{SH}(M) \cup \{\varnothing\}$ be a function. Then $Q$ is an $n$-ary $\phi$-classical prime subhypermodule of $M$ if and only if $g(Q_1^{n-1},a) \subseteq Q-\phi(Q)$ for some subhypermodules $Q_1^{n-1}$ of $M$ and $a \in M$ implies that $g(Q_i,M^{(n-2)},a) \subseteq Q$ for some $1 \leq i \leq n-1$.
\end{theorem}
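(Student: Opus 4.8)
The plan is to reduce everything to the hyperideal-level characterization already established in Theorem \ref{multiplication}, using the multiplication hypothesis on $M$ as a dictionary between subhypermodules of $M$ and hyperideals of $R$. Since $M$ is a multiplication $(m,n)$-hypermodule, each subhypermodule $Q_i$ appearing in the statement can be written as $Q_i = g(I_i, 1^{(n-2)}, M)$ for some hyperideal $I_i$ of $R$, and by the very definition of the product recalled just before the theorem we then have the two translation identities $g(Q_1^{n-1}, a) = g(I_1^{n-1}, a)$ and $g(Q_i, M^{(n-2)}, a) = g(I_i, 1^{(n-2)}, a)$. These identities are the heart of the argument; everything else is bookkeeping.

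For the forward implication, I would assume $Q$ is an $n$-ary $\phi$-classical prime subhypermodule and suppose $g(Q_1^{n-1}, a) \subseteq Q - \phi(Q)$ for subhypermodules $Q_1^{n-1}$ and $a \in M$. First I fix hyperideals $I_1^{n-1}$ with $Q_i = g(I_i, 1^{(n-2)}, M)$ via the multiplication property, so that the first translation identity gives $g(I_1^{n-1}, a) \subseteq Q - \phi(Q)$. Applying Theorem \ref{multiplication} yields $g(I_i, 1^{(n-2)}, a) \subseteq Q$ for some $1 \leq i \leq n-1$, and the second translation identity rewrites this as $g(Q_i, M^{(n-2)}, a) \subseteq Q$, which is exactly what is required.

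For the converse, I would assume the product condition and verify the hypotheses of Theorem \ref{multiplication}: given hyperideals $I_1^{n-1}$ of $R$ and $a \in M$ with $g(I_1^{n-1}, a) \subseteq Q - \phi(Q)$, set $Q_i = g(I_i, 1^{(n-2)}, M)$ for each $i$. Then $g(Q_1^{n-1}, a) = g(I_1^{n-1}, a) \subseteq Q - \phi(Q)$, so the assumed product condition gives $g(Q_i, M^{(n-2)}, a) \subseteq Q$ for some $i$, and hence $g(I_i, 1^{(n-2)}, a) \subseteq Q$. By Theorem \ref{multiplication}, $Q$ is an $n$-ary $\phi$-classical prime subhypermodule of $M$.

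The argument is almost purely formal, so I do not expect a genuine obstacle; the only point needing a little care is the appeal to the definition of the product of subhypermodules, which implicitly requires a choice of representing hyperideal $I_i$ for each $Q_i$. One should either check that the quantities $g(Q_1^{n-1}, a)$ and $g(Q_i, M^{(n-2)}, a)$ are independent of these choices, or simply fix one representation at the outset and carry it through both directions, which is what I would do.
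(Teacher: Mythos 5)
Your proposal is correct and follows essentially the same route as the paper's own proof: both directions reduce to the hyperideal-level characterization (Theorem \ref{multiplication}) by writing each $Q_i$ as $g(I_i,1^{(n-2)},M)$ via the multiplication hypothesis and using the translation identities $g(Q_1^{n-1},a)=g(I_1^{n-1},a)$ and $g(Q_i,M^{(n-2)},a)=g(I_i,1^{(n-2)},a)$. Your closing remark about the product depending on a choice of representing hyperideals is a legitimate subtlety the paper passes over silently; fixing one representation throughout, as you suggest, is exactly what the paper's proof implicitly does.
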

\begin{proof}
$(\Longrightarrow)$ Assume that $g(Q_1^{n-1},a) \subseteq Q-\phi(Q)$ for some subhypermodules $Q_1^{n-1}$ of $M$ and $a \in M$. Since $M$ is a multiplication $(m,n)$-hypermodule, then there exist some  hyperideals  $I_1^{n-1}$ of $R$ with $Q_i=g(I_i,1^{(n-2)},M)$ for each $1 \leq i \leq n-1$. Therefore we have $g(Q_1^{n-1},a)=g(I_1^{n-1},a) \subseteq Q-\phi(Q)$. Since $Q$ is an $n$-ary $\phi$-classical prime subhypermodule of $M$, then    $g(I_i,1^{(n-1)},a) \subseteq Q$ for some $1 \leq i \leq n-1$ by Theorem \ref{multiplication}. This means that $g(Q_i,M^{(n-2)},a) \subseteq Q$, as needed.

$(\Longleftarrow)$ Let $g(I_1^{n-1},a) \subseteq Q-\phi(Q)$ for some hyperideals $I_1^{n-1} $ of $R$ and $a \in M$. Now, we put $Q_i=g(I_i,1^{(n-2)},M)$ for each $1 \leq i \leq n-1$. Then we have $g(Q_1^{n-1},a) \subseteq Q-\phi(Q)$ which implies $g(Q_i,M^{(n-2)},a) \subseteq Q$ for some $1 \leq i \leq n-1$. Therefore $g(I_i,1^{(n-2)},a) \subseteq Q$. Thus $Q$ is an $n$-ary $\phi$-classical prime subhypermodule of $M$ by Theorem \ref{multiplication}.
\end{proof}
\begin{theorem}
Assume that $(M_1 \times M_2,f_1 \times f_2, g_1 \times g_2)$ is an $(m,n)$-hypermodule over $(R_1 \times R_2, f^{\prime}_1 \times f^{\prime}_2,g^{\prime}_1 \times g^{\prime}_2)$ such that $(M_1,f_1,g_1)$ is an $(m,n)$-hypermodule over $(R_1, f^{\prime}_1,g^{\prime}_1)$ and $(M_2,f_2,g_2)$ is an $(m,n)$-hypermodule over $(R_2, f^{\prime}_2, g^{\prime}_2)$.  Let $\phi: \mathcal{SH}(M_1 \times M_2) \longrightarrow \mathcal{SH}(M_1 \times M_2) \cup \{\varnothing\}$ be a function. If  $Q_1$ is  an $n$-ary weakly classical prime subhypermodule of $M_1$ with $\{0\} \times M_2 \subseteq \phi(Q_1 \times M_2)$, then $Q_1 \times M_2$ is an $n$-ary $\phi$-classical prime subhypermodule of $M_1 \times M_2$.
\end{theorem}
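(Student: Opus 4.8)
The plan is to verify the defining condition of an $n$-ary $\phi$-classical prime subhypermodule directly, reducing everything to the weakly classical prime property of $Q_1$ in the first coordinate. First I would assume $g_1 \times g_2((r_1,s_1),\cdots,(r_{n-1},s_{n-1}),(a,b)) \subseteq (Q_1 \times M_2) - \phi(Q_1 \times M_2)$ for some $(r_i,s_i) \in R_1 \times R_2$ and $(a,b) \in M_1 \times M_2$. By the coordinatewise definition of the product hyperoperation, this set equals $\{(y_1,y_2) \ \vert \ y_1 \in g_1(r_1^{n-1},a),\ y_2 \in g_2(s_1^{n-1},b)\}$, and its containment in $Q_1 \times M_2$ immediately yields $g_1(r_1^{n-1},a) \subseteq Q_1$ by projecting onto the first coordinate.

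The key step is to show that $0 \notin g_1(r_1^{n-1},a)$, so that the weakly classical prime hypothesis on $Q_1$ can be invoked; this is precisely where the assumption $\{0\} \times M_2 \subseteq \phi(Q_1 \times M_2)$ enters. I would argue by contradiction: if $0 \in g_1(r_1^{n-1},a)$, then choosing any $y_2 \in g_2(s_1^{n-1},b)$ (nonempty, being a value of a hyperoperation) produces a point $(0,y_2) \in g_1 \times g_2(\cdots)$ which also lies in $\{0\} \times M_2 \subseteq \phi(Q_1 \times M_2)$. But $g_1 \times g_2(\cdots) \subseteq (Q_1 \times M_2) - \phi(Q_1 \times M_2)$ forces the product set to be disjoint from $\phi(Q_1 \times M_2)$, a contradiction. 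Hence $0 \notin g_1(r_1^{n-1},a) \subseteq Q_1$.

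Finally, since $Q_1$ is an $n$-ary weakly classical prime subhypermodule of $M_1$, the relation $0 \notin g_1(r_1^{n-1},a) \subseteq Q_1$ gives $g_1(r_i,1^{(n-2)},a) \subseteq Q_1$ for some $1 \leq i \leq n-1$. Applying the product hyperoperation in that coordinate $i$ with scalar identity $(1,1)$ of $R_1 \times R_2$, I obtain $g_1 \times g_2((r_i,s_i),(1,1)^{(n-2)},(a,b)) = \{(y_1,y_2) \ \vert \ y_1 \in g_1(r_i,1^{(n-2)},a),\ y_2 \in g_2(s_i,1^{(n-2)},b)\} \subseteq Q_1 \times M_2$, because the first coordinate lands in $Q_1$ and the second always lands in $M_2$. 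This is exactly the required conclusion, so $Q_1 \times M_2$ is $n$-ary $\phi$-classical prime.

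I expect the only genuine subtlety to be the $0 \notin g_1(r_1^{n-1},a)$ argument: one must keep in mind that disjointness from $\phi(Q_1 \times M_2)$ is a statement about the \emph{entire} product set, so that a single witness $(0,y_2) \in \{0\} \times M_2$ already triggers the contradiction. The remainder is routine bookkeeping with the coordinatewise definition of $g_1 \times g_2$ and the identity element $(1,1)$.
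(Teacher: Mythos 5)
Your proposal is correct and follows essentially the same route as the paper: project the containment to get $g_1(r_1^{n-1},a) \subseteq Q_1$, use $\{0\} \times M_2 \subseteq \phi(Q_1 \times M_2)$ to rule out $0 \in g_1(r_1^{n-1},a)$, invoke the weakly classical prime property of $Q_1$, and lift back to the product. The only difference is that you spell out the contradiction argument for $0 \notin g_1(r_1^{n-1},a)$, which the paper asserts without justification.
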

\begin{proof}
Let  $g_1 \times g_2((r_1,s_1), \cdots (r_{n-1},s_{n-1}),(a,b)) =\{(y_1,y_2) \ \vert \ y_1 \in g_1(r_1^{n-1},a), y_2 \in g_2(s_1^{n-1},b)\}\subseteq Q_1 \times M_2-\phi(Q_1 \times M_2)$ for some $(r_1,s_1), \cdots , (r_{n-1},s_{n-1}) \in R_1 \times R_2$ and $(a,b) \in M_1 \times M_2$. Therefore $0 \notin g_1(r_1^{n-1},a) \subseteq Q_1$. Since $Q_1$ is an $n$-ary weakly classical prime subhypermodule of $M_1$, we conclude that $g_1(r_i,1^{(n-2)},a) \subseteq Q_1$ for some $1 \leq i \leq n-1$ which implies $g_1 \times g_2((r_i,s_i),(1,1)^{n-2},(a,b)) \subseteq Q_1 \times M_2$. This means that $Q_1 \times M_2$ is an $n$-ary $\phi$-classical prime subhypermodule of $M_1 \times M_2$.
\end{proof}
\begin{theorem}
Suppose that  $(M_1 \times M_2,f_1 \times f_2, g_1 \times g_2)$ is an $(m,n)$-hypermodule over $(R_1 \times R_2, f^{\prime}_1 \times f^{\prime}_2,g^{\prime}_1 \times g^{\prime}_2)$ such that $(M_1,f_1,g_1)$ is an $(m,n)$-hypermodule over $(R_1, f^{\prime}_1,g^{\prime}_1)$ and $(M_2,f_2,g_2)$ is an $(m,n)$-hypermodule over $(R_2, f^{\prime}_2, g^{\prime}_2)$.  Let $\phi_1: \mathcal{SH}(M_1) \longrightarrow \mathcal{SH}(M_1 ) \cup \{\varnothing\}$ and $\phi_2: \mathcal{SH}(M_2) \longrightarrow \mathcal{SH}(M_2 ) \cup \{\varnothing\}$ be two functions such that $\phi_2(M_2)=M_2$. Then  $Q_1 \times M_2$ is an $n$-ary $\phi_1 \times \phi_2$-classical prime subhypermodule of $M_1 \times M_2$ if and only if $Q_1$ is an $n$-ary $\phi_1$-classical prime subhypermodule of $M_1$.
\end{theorem}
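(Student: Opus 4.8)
The plan is to deduce both implications from a single set identity describing the admissible region $(Q_1 \times M_2)-(\phi_1 \times \phi_2)(Q_1 \times M_2)$. First I would unwind the product function on the specific subhypermodule $Q_1 \times M_2$: using the natural product rule together with the hypothesis $\phi_2(M_2)=M_2$, one gets $(\phi_1 \times \phi_2)(Q_1 \times M_2)=\phi_1(Q_1) \times \phi_2(M_2)=\phi_1(Q_1) \times M_2$. A short membership check then establishes the key identity
\[(Q_1 \times M_2)-(\phi_1 \times \phi_2)(Q_1 \times M_2)=(Q_1-\phi_1(Q_1)) \times M_2,\]
because $(x,y)$ lies in the left side precisely when $x \in Q_1-\phi_1(Q_1)$ and $y \in M_2$, the latter condition being automatic. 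This identity is the crux of the argument and is exactly the place where $\phi_2(M_2)=M_2$ is used.

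For the $(\Longleftarrow)$ direction I would assume $Q_1$ is $n$-ary $\phi_1$-classical prime and take $g_1 \times g_2((r_1,s_1),\cdots,(r_{n-1},s_{n-1}),(a,b)) \subseteq (Q_1 \times M_2)-(\phi_1 \times \phi_2)(Q_1 \times M_2)$. By the identity, reading off first coordinates forces $g_1(r_1^{n-1},a) \subseteq Q_1-\phi_1(Q_1)$, while the second-coordinate requirement $g_2(s_1^{n-1},b)\subseteq M_2$ is trivially satisfied. The defining property of $Q_1$ then produces $g_1(r_i,1^{(n-2)},a) \subseteq Q_1$ for some $1 \leq i \leq n-1$, and since the second coordinate of $g_1 \times g_2((r_i,s_i),(1,1)^{(n-2)},(a,b))$ again lands in $M_2$, this immediately gives $g_1 \times g_2((r_i,s_i),(1,1)^{(n-2)},(a,b)) \subseteq Q_1 \times M_2$, as needed.

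For the $(\Longrightarrow)$ direction I would assume $Q_1 \times M_2$ is $\phi_1 \times \phi_2$-classical prime and start from $g_1(r_1^{n-1},a) \subseteq Q_1-\phi_1(Q_1)$ for some $r_1^{n-1} \in R_1$ and $a \in M_1$. I would pick any $b \in M_2$ and any $s_1^{n-1} \in R_2$; by the identity the set $g_1 \times g_2((r_1,s_1),\cdots,(r_{n-1},s_{n-1}),(a,b))$ sits inside $(Q_1 \times M_2)-(\phi_1 \times \phi_2)(Q_1 \times M_2)$. Applying the hypothesis on $Q_1 \times M_2$ gives some $1 \leq i \leq n-1$ with $g_1 \times g_2((r_i,s_i),(1,1)^{(n-2)},(a,b)) \subseteq Q_1 \times M_2$, and projecting to the first coordinate yields $g_1(r_i,1^{(n-2)},a) \subseteq Q_1$. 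Properness passes in both directions, since $Q_1 \times M_2 \neq M_1 \times M_2$ if and only if $Q_1 \neq M_1$.

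The computations are all routine coordinate-wise translations; the single step that genuinely needs care is the membership analysis behind the displayed difference identity, specifically checking that the second coordinates --- which sweep out all of $M_2$ and all of $\phi_2(M_2)=M_2$ --- never obstruct the required containments. Once that identity is secured, the rest is immediate, so I expect no serious obstacle beyond that verification.
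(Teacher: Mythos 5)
Your proof is correct and follows essentially the same route as the paper: both arguments reduce the product condition coordinatewise, using $\phi_2(M_2)=M_2$ so that the second coordinate never obstructs the containments (your displayed set identity is just an explicit packaging of the step the paper performs implicitly when it writes $Q_1 \times M_2-(\phi_1(Q_1) \times \phi_2(M_2))$). The only cosmetic differences are that you allow arbitrary scalars $s_i \in R_2$ where the paper takes $s_i=1$, and you explicitly note the transfer of properness, which the paper leaves tacit.
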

\begin{proof}
$(\Longrightarrow)$ Assume that $Q_1 \times M_2$ is an $n$-ary $\phi_1 \times \phi_2$-classical prime subhypermodule of $M_1 \times M_2$. Let $g_1(r_1^{n-1},a_1) \subseteq Q_1-\phi_1(Q_1)$ for some $r_1^{n-1} \in R$ and $a_1 \in M_1$. Therefore we have $g_1 \times g_2((r_1,1),\cdots,(r_{n-1},1)(a_1,a_2)) \subseteq Q_1 \times M_2-\phi_1 \times \phi_2(Q_1 \times M_2)=Q_1 \times M_2-(\phi_1(Q_1) \times \phi_2( M_2))$ for all $a_2 \in M_2$. Since  $Q_1 \times M_2$ is an $n$-ary $\phi_1 \times \phi_2$-classical prime subhypermodule of $M_1 \times M_2$, we obtain $g_1 \times g_2((r_i,1),(1,1)^{(n-2)},(a_1,a_2)) \subseteq Q_1 \times M_2$ for some $1 \leq i \leq n-1$ which means $g_1(r_i,1^{(n-2)},a_1) \subseteq Q_1$. This shows that $Q_1$ is an $n$-ary $\phi_1$-classical prime subhypermodule of $M_1$.

$(\Longleftarrow)$ Let $Q_1$ be an $n$-ary $\phi_1$-classical prime subhypermodule of $M_1$. Assume that $g_1 \times g_2((r_1,s_1), \cdots, (r_{n-1},s_{n-1})(a_1,a_2)) \subseteq Q_1 \times M_2 -\phi_1 \times \phi_2(Q_1 \times M_2)$. From $\phi_2(M_2)=M_2$, it follows that $g_1(r_1^{n-1},a_1) \subseteq Q_1-\phi_1(Q_1)$. Then we have $g_1(r_i,1^{(n-2)},a_1) \subseteq Q_1$ for some $1 \leq i \leq n-1$. So we conclude that $g_1 \times g_2((r_i,s_i), (1,1)^{(n-2)},(a_1,a_2)) \subseteq Q_1 \times M_2$. Consequently, $Q_1 \times M_2$ is an $n$-ary $\phi_1 \times \phi_2$-classical prime subhypermodule of $M_1 \times M_2$.
\end{proof}
\begin{theorem}
Let $(M_1 \times M_2,f_1 \times f_2, g_1 \times g_2)$ be an $(m,n)$-hypermodule over $(R_1 \times R_2, f^{\prime}_1 \times f^{\prime}_2,g^{\prime}_1 \times g^{\prime}_2)$ such that $(M_1,f_1,g_1)$ is an $(m,n)$-hypermodule over $(R_1, f^{\prime}_1,g^{\prime}_1)$ and $(M_2,f_2,g_2)$ is an $(m,n)$-hypermodule over $(R_2, f^{\prime}_2, g^{\prime}_2)$.  Assume that  $\phi_1: \mathcal{SH}(M_1) \longrightarrow \mathcal{SH}(M_1 ) \cup \{\varnothing\}$ and $\phi_2: \mathcal{SH}(M_2) \longrightarrow \mathcal{SH}(M_2 ) \cup \{\varnothing\}$ be two functions. If  $Q_1 \times Q_2$ is an $n$-ary $\phi_1 \times \phi_2$-classical prime subhypermodule of $M_1 \times M_2$, then  $Q_1$ is an $n$-ary $\phi_1$-classical prime subhypermodule of $M_1$ and $Q_2$ is an $n$-ary $\phi_2$-classical prime subhypermodule of $M_2.$
\end{theorem}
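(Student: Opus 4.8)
The plan is to prove the two assertions symmetrically by a ``padding with zero'' device in the complementary coordinate, exploiting the identity $g(r_1^{n-1},0)=\{0\}$ valid in any $(m,n)$-hypermodule together with the product description $(\phi_1\times\phi_2)(Q_1\times Q_2)=\phi_1(Q_1)\times\phi_2(Q_2)$ already used in the preceding theorems. Throughout I read the hypothesis as requiring $Q_1$ and $Q_2$ to be proper subhypermodules, so that the conclusion (which demands properness) makes sense; this is the only point where I would flag that the statement implicitly carries that assumption.

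First I would show $Q_1$ is an $n$-ary $\phi_1$-classical prime subhypermodule of $M_1$. Suppose $g_1(r_1^{n-1},a_1)\subseteq Q_1-\phi_1(Q_1)$ for some $r_1^{n-1}\in R_1$ and $a_1\in M_1$. Consider the tuple $(r_1,1),\dots,(r_{n-1},1)\in R_1\times R_2$ together with $(a_1,0)\in M_1\times M_2$. Since $g_2(1^{(n-1)},0)=\{0\}$, one computes $g_1\times g_2((r_1,1),\dots,(r_{n-1},1),(a_1,0))=\{(y_1,0)\mid y_1\in g_1(r_1^{n-1},a_1)\}$. Every such $y_1$ lies in $Q_1$ and outside $\phi_1(Q_1)$, and $0\in Q_2$ because $Q_2$ is a subhypermodule, so each pair $(y_1,0)$ belongs to $Q_1\times Q_2$ while failing to belong to $\phi_1(Q_1)\times\phi_2(Q_2)$. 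Hence the whole set lies in $(Q_1\times Q_2)-(\phi_1\times\phi_2)(Q_1\times Q_2)$. Invoking the $n$-ary $\phi_1\times\phi_2$-classical primeness of $Q_1\times Q_2$ yields some $1\le i\le n-1$ with $g_1\times g_2((r_i,1),(1,1)^{(n-2)},(a_1,0))\subseteq Q_1\times Q_2$, and reading off the first coordinate gives $g_1(r_i,1^{(n-2)},a_1)\subseteq Q_1$, which is exactly the defining condition.

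The symmetric half for $Q_2$ proceeds identically with the roles of the coordinates interchanged: given $g_2(s_1^{n-1},a_2)\subseteq Q_2-\phi_2(Q_2)$, I would test the tuple $(1,s_1),\dots,(1,s_{n-1})$ on $(0,a_2)$, using $g_1(1^{(n-1)},0)=\{0\}$ to land inside $(Q_1\times Q_2)-(\phi_1\times\phi_2)(Q_1\times Q_2)$, and then extract $g_2(s_i,1^{(n-2)},a_2)\subseteq Q_2$ from the second coordinate. This shows $Q_2$ is an $n$-ary $\phi_2$-classical prime subhypermodule of $M_2$.

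The step I expect to require the most care is not any computation but the verification that the padded test set really lands in the prescribed difference set. The subtlety is that a pair escapes the \emph{product} $\phi_1(Q_1)\times\phi_2(Q_2)$ as soon as \emph{one} of its coordinates escapes the corresponding $\phi_i(Q_i)$; thus even though the padded coordinate $0$ may happen to lie in $\phi_2(Q_2)$ (resp. $\phi_1(Q_1)$), the hypothesis $g_1(r_1^{n-1},a_1)\subseteq Q_1-\phi_1(Q_1)$ forces the active coordinate out of $\phi_1(Q_1)$ and the argument goes through. The identity $(\phi_1\times\phi_2)(Q_1\times Q_2)=\phi_1(Q_1)\times\phi_2(Q_2)$ is the only external fact invoked, and it matches the convention used in the earlier product theorems; the remainder is a direct unwinding of the definition once the correct test tuple has been chosen.
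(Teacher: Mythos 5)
Your proof is correct and takes essentially the same route as the paper's: the paper likewise tests $Q_1\times Q_2$ on the tuple $(r_1,1),\dots,(r_{n-1},1)$ paired with $(a_1,a_2)$ for an element $a_2\in Q_2$ (you simply make the specific choice $a_2=0$), uses that escape from $\phi_1(Q_1)$ in the first coordinate forces escape from $\phi_1(Q_1)\times\phi_2(Q_2)$, and reads off the first coordinate of the conclusion, handling $Q_2$ symmetrically. Your side remark that the statement implicitly assumes both $Q_1$ and $Q_2$ are proper (properness of $Q_1\times Q_2$ alone does not give this) is a legitimate point the paper's proof passes over silently.
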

\begin{proof}
Let $Q_1 \times Q_2$ be an $n$-ary $\phi_1 \times \phi_2$-classical prime subhypermodule of $M_1 \times M_2$. Assume that $g_1(r_1^{n-1},a_1) \subseteq Q_1 - \phi_1(Q_1)$ for some $r_1^{n-1} \in R$ and $a \in M_1$. Pick $a_2 \in Q_2$. So $g_1 \times g_2((r_1,1),\cdots,(r_{n-1},1)(a_1,a_2)) \subseteq Q_1 \times Q_2-\phi_1 \times \phi_2(Q_1 \times Q_2)$. Therefore we get $g_1 \times g_2((r_i,1),(1,1)^{(n-2)},(a_1,a_2)) \subseteq Q_1 \times Q_2$ for some $1 \leq i \leq n-1$ which implies $g_1(r_i,1^{(n-2)},a_1) \subseteq Q_1$. Thus $Q_1$ is an $n$-ary $\phi_1$-classical prime subhypermodule of $M_1$. Similarly, we can show that $Q_2$ is an $n$-ary $\phi_2$-classical prime subhypermodule of $M_2.$
\end{proof}
\section{conclusion}
The notion of prime submodules has a significant place in the theory of modules, and it is used to characterize certain classes of modules. In this paper, we studied some generalizations on this issue  in the context of $(m, n)$-hypermodules. We introduced $n$-ary classical prime, $n$-ary weakly classical prime and $n$-ary $\phi$-classical prime subhypermodules.
In this direction we  gave some
characterizations of such subhypermodules. The future work can be on
defining the notions of  classical primary, weakly classical primary and  $\phi$-classical primary subhypermodules of an $(m,n)$-hypermodules over a Krasner $(m,n)$-hyperring.

\end{document}